\newtheorem{theorem}{\bf Theorem}[subsection]
\newtheorem{theoremsection}{\bf Theorem}[section]
\newtheorem{prop}[theorem]{\bf Proposition}
\newtheorem{definition}[theorem]{\bf Definition}
\newtheorem{definitionsection}[theoremsection]{\bf Definition}
\theoremstyle{remark}
\newtheorem{example}[theorem]{\bf Example}
\theoremstyle{remark}
\newtheorem{rem}[theorem]{\bf Remark}
\newtheorem{remsection}[theoremsection]{\bf Remark}
 \numberwithin{equation}{subsection}
\newcommand{\ie}{{\it i.e.\;}}
\newcommand{\ad}{\operatorname{ad}}
\def\go{\mathfrak}
\def\bb{\mathbb}
\def\C{\bb C}
\def\N{\bb N}
\def\go{\mathfrak}
\def\bb{\mathbb}
 \def\adots{\mathinner{\mkern2mu\raise1pt\hbox{.}
\mkern3mu\raise4pt\hbox{.}\mkern1mu\raise7pt\hbox{.}}}
 \title[Multiplicity free spaces]{Multiplicity free spaces with a one dimensional quotient}
\author{Hubert Rubenthaler}
\address
{Hubert Rubenthaler\\ Institut de Recherche Math\'ematique Avanc\'ee\\
Universit\'e de Strasbourg et CNRS\\
7 rue Ren\'e Descartes\\
67084 Strasbourg Cedex\\ France\\
E-mail: {\tt rubenth@math.unistra.fr}}
\begin{document} 
\parindent=0pt
 \maketitle


\begin{abstract} The multiplicity free spaces with a one dimensional quotient were introduced by Thierry Levasseur in \cite{Levasseur}.   Recently, the author has shown that the algebra of differential operators on such spaces which are invariant under the semi-simple part of the group is a Smith algebra (\cite{Rubenthaler-mult-free}). We give here the classification of these spaces which are indecomposable, up to geometric equivalence. We also investigate  whether or not these spaces  are regular or of parabolic type as a prehomogeneous vector space.

 \end{abstract}
  {\small \def\contentsname{Table of Contents}
\tableofcontents}
\section{Introduction}

A multiplicity free space is a  representation of a connected reductive group $G$ on a finite dimensional vector space $V$ (everything is defined over $\C$) such that every irreducible representation of $G$ appears at most once in the associated representation of $G $ on the space $\C[V]$ of polynomials on $V$ (see Section 2 for details). For a survey on multiplicity free spaces we refer to \cite{Benson-Ratcliff-survey}. 
Multiplicity free spaces, which were introduced by V. Kac in \cite{Kac}, play now an important role in invariant theory and harmonic analysis (see for example \cite{Howe}, \cite{Howe-Umeda}, \cite{Knop}, the references in \cite{Benson-Ratcliff-survey}, see also \cite{Kobayashi} for a more general concept). Various characterizations of multiplicity free spaces, which are summarized in Theorem \ref{th-caracterisation-MF} below, were obtained by Vinberg-Kimelfeld (\cite{Vinberg-Kimelfeld}), Howe-Umeda (\cite{Howe-Umeda}), Knop (\cite{Knop}).   A corollary of these characterizations is that a multiplicity free space is always a prehomogeneous space (in fact even under a Borel subgroup). This is the reason why prehomogeneous vector spaces occur so often in this paper. The classification of multiplicity free spaces was achieved independently by Benson-Ratcliff (\cite{Benson-Ratcliff-article}) and Leahy (\cite{Leahy}) after partial classifications by Brion (\cite{Brion}) and Kac (\cite{Kac}). 
\vskip 5pt

In this paper we are interested in a specific family of multiplicity free spaces, the so-called multiplicity free spaces with a one dimensional quotient which were introduced by T. Levasseur in \cite{Levasseur}. This means roughly speaking that the categorical quotient $V/\kern -1mm{/}G'$ has dimension one, where $G'$ is the semi-simple part of $G$ (see Definition \ref{def-MF} below). In his paper T. Levasseur proves that if $(G,V)$ is a multiplicity free space with a one dimensional quotient, then the radial component of the (non-commutative) algebra $D(V)^{G'}$ of $G'$-invariant differential operators is a Smith algebra over $\C$. In a recent paper (\cite {Rubenthaler-mult-free}) we showed that  the full algebra $D(V)^{G'}$ is a Smith algebra over its center, which is a polynomial algebra. 

\vskip 5pt

The purpose of this paper is to give the complete classification of all multiplicity free spaces with a one dimensional quotient, including irreducible and non irreducible representations.  Our classification is obtained up to geometric equivalence, which is the natural equivalence relation among multiplicity free spaces. It is worthwhile noticing that the list of irreducibles already appears  in \cite{Levasseur}. Moreover our investigations lead in all cases (irreducibles and non irreducibles) to some extra informations like parabolicity, regularity, and explicit fundamental relative invariants of the underlying prehomogeneous vector spaces.

\vskip 5pt

In section 2 we recall general facts about multiplicity free space. We give first a brief account of the theory of prehomogeneous vector spaces (2.1) and also recall the definition of parabolic type prehomogeneous spaces (2.2), including their weighted Dynkin diagrams which encode many informations (see Definition \ref{def-diagramme} and Remark \ref{rem-diagram}). General  definitions and results about multiplicity free spaces can be found   in 2.3. and 2.4. In 2.5, as an example,  we describe an important family of irreducible multiplicity free space with a one dimensional quotient, namely the irreducible regular prehomogeneous vector spaces of commutative parabolic type.

\vskip 5pt

Section 3 contains the main result, the classification theorem of the multiplicity free spaces with a one dimensional quotient (Theorem \ref{th-classification}). The corresponding lists (Tables 2 and 3) take place at the end of the paper.

\vskip 5pt

Section 4 is devoted to the proof of Theorem \ref{th-classification}. The proof uses case by case examinations from the list by Benson and Ratcliff (\cite{Benson-Ratcliff-article}) and some tools from the theory of prehomogeneous vector spaces.

\vskip 20pt
{\bf Notations: }
In this paper we will denote by $GL(n)$, $SL(n)$, $SO(n)$ the general linear group , the special linear group, the special orhogonal group of complex matrices of size $n$ respectively. As usual we will denote by $Sp(n)$ the symplectic group of $2n\times 2n$ complex matrices. We will also denote by $\go{gl}(n)$, $\go{sl}(n)$, $\go{o}(n)$, $\go{sp}(n)$ the corresponding Lie algebras.
The vector space of $m\times n$ complex matrices will be denoted by $M_{m,n}$, and the space of square $n\times n$ matrices will be denoted by $M_{n}$. Finally $Sym(n)$ will denote the $n\times n$ symmetric matrices and  $AS(n)$ will denote the skew symmetric matrices. If $n$ is even and if $x\in AS(n)$, then $Pf(x) $ stands for the pfaffian of the matrix $x$.


\section{Multiplicity free spaces. Basic definitions and properties}

\vskip 5pt
\subsection {Prehomogeneous Vector Spaces} \hfill
 \vskip5pt

  Let $G$ be a  connected algebraic group over ${\bb C}$, and let $(G,\rho, V)$ be a   rational representation of $G$ on the (finite dimensional) vector space $V$. Then the triplet $(G,\rho,V)$ is called a {\it prehomogeneous vector space} (abbreviated to $PV$) if the action of $G$ on $V$ has a Zariski open orbit $\Omega\in V$.   For the general theory of $PV$'s, we refer the reader to the book  of Kimura \cite{Kimura-book} or to \cite{Sato-Kimura}. The elements in $\Omega$ are called {\it generic}. The $PV$ is said to be {\it irreducible } if  the corresponding representation is irreducible. The {\it singular set} $S$ of  $(G,\rho,V)$ is defined by $S=V\setminus \Omega$. Elements in $S$ are called {\it singular}. If no confusion can arise we often simply denote the $PV$ by $(G,V)$. We will also   write $g.x$ instead of $\rho(g)x$, for $g\in G$ and $x\in V$. It is easy to see that the condition for a rational representation $(G,\rho,V)$ to be a $PV$ is in fact an infinitesimal condition. More precisely let ${\go g}$ be the Lie algebra of $G$ and let $d\rho$ be the derived representation of $\rho$. Then $(G,\rho,V)$ is a PV if and only if there exists $v\in V$ such that the map:
$$\begin{array}{rcl}
{\go g}&\longrightarrow&V\\
X&\longmapsto&d\rho(X)v
\end{array}$$
is surjective (we will often write $X.v$ instead of $d\rho(X)v$). Therefore we will call $({\go g}, V)$ a $PV$ if the preceding condition is satisfied.

Let $(G,V)$ be  a $PV$. A rational function $f$ on $V$ is called a {\it relative invariant} of    $(G,V)$ if there exists a rational character $\chi $ of $G$ such that $f(g.x)=\chi(g)f(x)$ for $g\in G$ and $x\in V$.  From the existence of an open orbit it is easy to see that a character $\chi$ which is trivial on the isotropy subgroup of an element  $x\in \Omega$ determines a unique    relative invariant $P_{\chi}$. Let $S_{1},S_{2},\dots,S_{k}$ denote the irreducible components of codimension one of the singular set $S$. Then there exist irreducible polynomials $P_{1}, P_{2},\dots,P_{k}$ such that $S_{i}=\{x\in V\,|\, P_{i}(x)=0\}$. The $P_{i}$'s are unique up to nonzero constants. It can be proved that the $P_{i}$'s are relative invariants of $(G,V)$ and any nonzero relative invariant $f$ can be written in a unique way $f=cP_{1}^{n_{1}}P_{2}^{n_{2}}\dots P_{k}^{n_{k}}$, where $n_{i}\in {\bb Z}$ and $c\in \C^*$ . The polynomials $P_{1}, P_{2},\dots,P_{k}$ are called the {\it fundamental relative invariants} of $(G,V)$. Moreover if the representation $(G,V)$ is irreducible then  there exists at most one irreducible polynomial (up to multiplication by a non zero constant) which is relatively invariant.

 The prehomogeneous vector space  $(G,V)$ is called {\it regular} if there exists a relative invariant polynomial $P$ whose Hessian $H_{P}(x)$ is nonzero on $\Omega$. If $G$ is reductive, then $(G,V)$ is regular if and only if the singular set $S$ is a hypersurface, or if and only if the isotropy subgroup of a generic point  is reductive. If the $PV$ $(G,V)$ is regular, then the contragredient representation $(G,V^*)$ is again a $PV$. Regular $PV$'s are of particular interest, due to the zeta functions that one can associate to their real forms (\cite{Sato-Shintani}).
 
 \begin{rem}\label{rem-caracteres} Let us mention a well known Lemma from the Theory of $PV$'s , which will be used in section 4. If $(G,V)$ is a $PV$, and if $X_{0}$ is a generic point, then the characters arising as characters of relative invariants are the characters of the quotient group  $G/H$ where $H$ is the normal subgroup of $G$ generated by the derived group $[G,G]$ and the generic isotropy subgroup $G_{X_{0}}$. This group does not depend on $X_{0}.$ For details, see \cite{Kimura-book}, Proposition 2.12. p.28.
 \end{rem}
 
 \vskip 15pt
 
 \subsection{PV's of parabolic type}\hfill
 
 \vskip 5pt
 
 A $PV$ $(G,V)$ is called reductive if the group $G$ is reductive. Among the reductive $PV$'s there is a family of particular interest, the so-called $PV$'s of parabolic type. Let ${\go g}$ be a simple Lie algebra over $\C$. Let ${\go h}$ be a Cartan subalgebra of ${\go g}$, and let $\Sigma$ be the root system of $({\go g},{\go h})$. We fix once
and for all a system of simple roots
$    \Psi $ for $\Sigma$. We denote by $\Sigma^+$ 
(resp. $\Sigma^-$) the corresponding set of positive (resp. negative)
roots in $\Sigma  $. Let
$\theta$ be a
subset of
$\Psi$ and let us   make the standard construction of the   parabolic subalgebra ${\go 
p}_\theta
\subset
{\go g}$ associated to $\theta$. As usual we   denote by $\langle
\theta\rangle$ the set of all roots which are linear combinations of elements in
$\theta$, and put $\langle \theta\rangle^\pm=\langle \theta\rangle\cap \Sigma^\pm$.

Set
\begin{align*} {\go  h}_\theta=\theta^\bot=\{X\in {\go  h}\,|\,\alpha(X)=0
\;\; \forall \alpha\in \theta\},    
 &\quad  {\go  l_\theta}={\go  z}_{{\go g}}({\go  h}_\theta)= {\go  h}\oplus
\sum_{\alpha\in \langle \theta\rangle} {\go g}^\alpha, &{\go 
n}_\theta^\pm=\sum_{\alpha\in \Sigma^\pm\setminus
\langle\theta\rangle^\pm} {\go g}^\alpha
\end{align*}

Then ${\go  p}_\theta= {\go  l}_\theta \oplus {\go  n}_\theta^+$ is  the standard
parabolic subalgebra  associated to $\theta$. There is also a standard ${\bb Z}$-grading
of ${\go g}$ related to these data. Define $H_\theta$ to be the unique element
of ${\go  h}_\theta$ satisfying the linear equations
$$\alpha(H_\theta)=0 \quad \forall \alpha\in \theta\quad {\rm and }  \quad
\alpha(H_\theta)=2 \quad \forall \alpha\in \Psi\setminus \theta.\quad    $$
The above mentioned grading is just the grading obtained from the eigenspace
decomposition of $\ad H_\theta$:
$$d_p(\theta)=\{X\in {\go g} \,|\,[H_\theta,X]=2pX\}.$$
Then we obtain easily:
$${\go g}=\oplus_{p\in{\bb Z}}d_p(\theta),\quad{\go 
l}_\theta=d_0(\theta),\quad{\go  n}_\theta^+=\sum_{p\geq 1} d_p(\theta),\quad{\go 
n}_\theta^-=\sum_{p\leq -1} d_p(\theta).$$
It  is known (using a result of Vinberg \cite{Vinberg}) that $({\go  l}_\theta,d_1(\theta))$
is a prehomogeneous vector space. In fact all the spaces $({\go 
l}_\theta,\,d_p(\theta))$ with ${p\not =0}$ are prehomogeneous, but there is no loss of
generality if we only consider $({\go  l}_\theta,\,d_1(\theta))$. These  spaces have
been called prehomogeneous vector spaces of parabolic type (\cite{Rubenthaler-note-PV}). There are in general
neither irreducible nor regular. But they are of particular interest, because in the
parabolic context, the group (or more precisely its Lie algebra ${\go  l}_\theta$) and
the space (here $d_1(\theta))$ of the $PV$ are embedded into  a rich structure, namely the
simple Lie algebra
${\go g}$. For example the derived representation of the $PV$ is just the   adjoint
representation  of
${\go  l}_\theta$ on $d_1(\theta)$. Moreover the Lie algebra ${\go g}$ also
contains the dual $PV$, namely $({\go  l}_\theta,d_{-1}(\theta))$. 

There is an easy criterion to decide wether or not an irreducible $PV$ of parabolic type is regular and in fact  most of the reduced irreducible  reductive regular $PV$'s from Sato-Kimura  list are of parabolic type (for details we refer to \cite{rub-kyoto},\cite{Rubenthaler-these-etat} and \cite{Rubenthaler-bouquin-PV}).


As these $PV$'s are in one to one correspondence with the subsets $\theta \subset \Psi$, we
will describe them by the mean of the following weighted Dynkin diagram:

\begin{definition} \label{def-diagramme}  
 The diagram of the $PV$ $({\go  l}_\theta,d_1(\theta))$
is the Dynkin diagram of $({\go g},{\go  h})$ $($or $\Sigma$ $)$ where the
vertices corresponding to the simple roots of $\Psi\setminus \theta$ are circled $($see an example
below$)$.
\end{definition}

 This very simple classification  by means of diagrams contains nevertheless some
immediate and  interesting information concerning the $PV$ $({\go  l}_\theta,d_1(\theta))$ (for all these
facts, see \cite{Rubenthaler-note-PV}, \cite{rub-kyoto}, \cite{Rubenthaler-these-etat} or \cite {Rubenthaler-bouquin-PV}):

\begin{rem}\label{rem-diagram}\hfill

a) The Dynkin diagram of $ {\go  l}_\theta'=[{\go 
l}_\theta,{\go  l}_\theta]   $ (\ie the semi-simple part of the Lie algebra of the group)
is the Dynkin diagram of ${\go g}$ where we have removed the circled vertices
and the edges connected to these vertices.

b) In fact as a Lie algebra ${\go  l_\theta ={\go 
l}_\theta}'\oplus{\go  h}_\theta$ and $\dim {\go  h}_\theta=$ the number of circled
vertices.

c)  The number of irreducible components of the representation $({\go l
}_\theta,\,d_1(\theta))$ is also equal to the number of circled roots, and hence the parabolic $PV$ $({\go l}_{\theta}, d_{1}(\theta))$ is irreducible if and only if ${\go p}_{\theta}$ is maximal.  More precisely, if
$\alpha$ is a (simple) circled root, then any nonzero root vector $X_\alpha\in 
{\go g}^\alpha$ generates an irreducible ${\go  l}_\theta$--module $V_\alpha$,
and $d_1(\theta)=\oplus_{\alpha\in\Psi\setminus \theta} V_\alpha$ is the decomposition of
$d_1(\theta)$ into irreducibles.

The decomposition of the representation (${\go  l}_\theta ,d_1(\theta)$) into
irreducibles can also be described by using the eigenspace  decomposition with respect to
$\ad({\go  h}_\theta)$, as we will explain now. For each $\alpha\in {\go  h}^*$, let
$\overline{\alpha}$ be the restriction of $\alpha$ to ${\go  h}_\theta$ and define 
$${\go g}^{\overline{\alpha}}=\{X\in {\go g}\,|\, \forall H \in {\go 
h}_\theta ,\, [H,X]= \overline{\alpha}(H)X\}.$$
Then ${\go g}^{\overline 0}={\go  l}_\theta$ and for $\alpha \in \Psi \setminus
\theta$, we have $V_\alpha = {\go g}^{\overline{\alpha}}$. Hence we can write
$d_1(\theta)=\oplus_{\alpha\in \Psi\setminus \theta}{\go g}^{\overline{\alpha}}.$

 Moreover one can notice (always for $\alpha \in \Psi\setminus \theta$) that
$V_\alpha={\go g}^{\overline{
\alpha}}=\sum_{\beta\in\sigma_1^\alpha}{\go g}^\beta$, where
$\sigma_1^\alpha$ is the set of roots which belong to $\alpha\,+\,\text{span}(\theta)$.

d)  One can  also directly read the highest weight of $V_\alpha$ from
the diagram. The highest weight of $V_\alpha$ relatively to the negative Borel sub-algebra ${\go 
b}_\theta^-={\go  h}\oplus\sum_{\alpha\in\langle \theta\rangle^-}{\go g}^\alpha$ is $\overline{\alpha}=\alpha_{|_{{\go  h}(\theta)}}$. Let $\omega_\beta$
$(\beta\in \theta)$ be the fundamental weights of ${\go  l}_\theta'$ (\ie the dual basis
of $(H_\beta)_{\beta\in \theta}$). For each circled  root $\alpha$ (\ie for
each
$\alpha\in \Psi\setminus \theta $\,), let $J_\alpha=\{(\beta_i)\} $ be the set of roots
in $\theta$ (= non-circled) which are connected to $\alpha$ in the diagram. From
elementary diagram considerations we know that $J_\alpha$ may be empty and that there are
always no more than
$3$ roots in
$J_\alpha$.

          If $J_\alpha=\emptyset$, then $V_\alpha$ is the trivial one dimensional
representation of ${\go  l}_\theta$. 
  
If $J_\alpha\not =\emptyset$, then the highest weight $\overline{\alpha}$ of $V_{\alpha}$ is given by 
$\overline{\alpha}=\sum_{i\in J_\alpha} c_i\omega_{\beta_i}$ where
$c_i=\alpha(H{_{\beta_i}})$ and where $  \alpha( H_{\beta_i})$ can be computed as follows:

$$ (R)
\begin{cases}
\text{ if  } ||\alpha||\leq||\beta_i||  , \text{ then }  \alpha(H_{\beta_i})=-1\ ;\\

\text{ if  }   ||\alpha||>||\beta_i||\text{  and if } \alpha \text{ and  } \beta_i \text{ are
connected by }
 j \text{ arrows }  ( 1\leq j\leq 3), \\
  \quad\text{ then }  \alpha(H_{\beta_i})=-j\ . 
\end{cases}
$$
\end{rem}
Let us illustrate this with an example.

\begin{example} \label{exdiagram}  Consider the following diagram:

 
 
 
 
 

\raisebox{8pt}{
\hbox{\unitlength=0.5pt
\hskip 100pt\begin{picture}(300,30)
\put(4,-12){$\alpha_{1}$}
\put(10,10){\circle*{10}}
\put(10,10){\circle{16}}
\put(15,10){\line (1,0){30}}
\put(44,-12){$\beta_{1}$}
\put(50,10){\circle*{10}}
\put(55,10){\line (1,0){30}}
\put(84,-12){$\beta_{2}$}
\put(90,10){\circle*{10}}
\put(95,10){\line (1,0){30}}
\put(124,-12){$\beta_{3}$}
\put(130,10){\circle*{10}}
\put(135,10){\line(1,0){35}}
\put(169,-12){$\beta_{4}$}
\put(175,10){\circle*{10}}
\put(180,10){\line (1,0){30}}
\put(209,-12){$\beta_{5}$}
\put(215,10){\circle*{10}}
\put(219,12){\line (1,0){41}}
\put(219,8){\line(1,0){41}}
\put(235,5.5){$<$}
\put(265,10){\circle*{10}}
\put(265,10){\circle{16}}
\put(259,-12){$\alpha_{2}$}
\end{picture} \,\,\,\raisebox{3pt} {$C_{7}$}
}}

 \end{example}
 \vskip 5pt
 This diagram   is the diagram of a $PV$ of parabolic type inside ${\go g}\simeq {\go{sp}}(7)\simeq C_{7}$. The Lie algebra ${\go l}_{\theta}$ is isomorphic to $A_{5}\oplus {\go h}_{\theta}\simeq \go{sl}(6)\oplus {\go h}_{\theta}$ where $\dim {\go h}_{\theta}=$ number of circled roots $=2$. There are two irreducible components $V_{\alpha_{1}}$ and $V_{\alpha_{2}}$, and the highest weight of $(A_{5},V_{\alpha_{1}})$ (resp. $(A_{5},V_{\alpha_{2}}$)) relatively to the Borel subalgebra ${\go b}_{\theta}^-$ is  $\omega_{1}$ (resp. $2\omega_{5}$), where  $\omega_{i}$ (i=1,\dots,5) are the fundamental weights of $A_{5}$ corresponding respectively to $\beta_{1},\dots,\beta_{5}$.


\subsection{Multiplicity free spaces}\hfill
\vskip 5pt

 For the results concerning multiplicity free spaces we refer the reader to the survey by Benson and Ratcliff (\cite{Benson-Ratcliff-survey}) or to \cite{Knop} .  
Let $(G,V)$ be a finite dimensional rational  representation of a  connected reductive algebraic group $G$. Let $\C[V]$ be the algebra of polynomials on $V$. Then $G$ acts on $\C [V]$ by 
$$g.\varphi(x)=\varphi(g^{-1}x)\qquad(g\in G, \varphi\in \C[V]).$$
As the space $\C[V]^n$ of homogeneous polynomials of degree $n$ is stable under this action, the representation $(G,\C[V])$ is completely reducible.
Let  $D(V)$ be the algebra of differential operators with polynomial coefficients. The group $G$ acts also on $D(V)$ by 
$$(g.D)(\varphi)= g.(D(g^{-1}.\varphi))\quad (g\in G, D\in D(V),  \varphi\in \C[V]).$$

\begin{definition} Let $G$ be a  connected reductive algebraic group, and let $V$ be the space of a finite dimensional $($complex$)$ rational representation of $G$. The representation $(G,V)$ is said to be multiplicity free if each irreducible representation of $G$ occurs at most once in the representation $(G, \C[V])$.
\end{definition}

From now on "multiplicity-free" will be abbreviated to "$MF$".

Let us give  some  results concerning $MF$  spaces (see \cite{Benson-Ratcliff-survey}, \cite{Howe-Umeda}, \cite{Knop}):
\begin{theorem}\label{th-caracterisation-MF}\hfill

$1)$  A finite dimensional representation $(G,V)$ is $MF$ if and only if $(B,V)$ is a  prehomogeneous vector space for any Borel subgroup $B$ of $G$ $($and hence each $MF$ space $(G,V)$  is a $PV$$)$.

$2)$  A finite dimensional representation $(G,V)$ is $MF$ if and only if the algebra   $D(V)^G$ of invariant differential operators with polynomial coefficients is commutative.

$3)$ If $(G,V)$ is a MF space, then the dual space $(G,V^*)$  is also MF.  More generally if $(G, W\oplus V)$ is a representation of $G$ where $W$ and $V$ are $G$-stable, then  $(G, W\oplus V)$ is MF if and only if  $(G, W\oplus V^*)$ is MF.
\end{theorem}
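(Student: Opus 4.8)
The plan is to prove Theorem~\ref{th-caracterisation-MF} by collecting the three characterizations from the literature and indicating the (short) arguments, since all three statements are known and the point here is mainly to record them in the form we need.

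\medskip

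\textbf{Part 1.} First I would recall that $(G,\C[V])$ is completely reducible (already noted above, using $G$-stability of each $\C[V]^n$ and reductivity of $G$). The decomposition $(G,V)$ is $MF$ means each irreducible appears at most once. The key classical input (Vinberg--Kimelfeld \cite{Vinberg-Kimelfeld}) is the following: for a connected reductive $G$ with Borel $B$, a completely reducible rational $G$-module $M$ is multiplicity free if and only if the space $M^{U}$ of highest weight vectors (for $U$ the unipotent radical of $B$) is multiplicity free as a module for the torus $T=B/U$, i.e. the weights of $M^U$ are pairwise distinct. Applying this to $M=\C[V]$, one uses that $\C[V]^U$ is multiplicity free over $T$ if and only if $\C[V]^U$ is a polynomial ring, which by a result going back to the same circle of ideas is equivalent to $B$ having a dense orbit on $V^*$ (equivalently, on $V$, since $B$ has a dense orbit on $V$ iff it has one on $V^*$ for solvable $B$ acting with an open orbit --- or one argues directly with $\C[V]$ versus $\C[V^*]$). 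I would then phrase this as: $(G,V)$ is $MF$ $\iff$ $\C[V]^U$ is a polynomial algebra $\iff$ $B$ acts on $V$ with a Zariski-dense orbit, i.e. $(B,V)$ is a $PV$. The final parenthetical assertion ``hence $(G,V)$ is a $PV$'' is immediate since $G\supseteq B$.

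\medskip

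\textbf{Part 2.} Here I would invoke the Howe--Umeda \cite{Howe-Umeda} / Knop \cite{Knop} result. The clean way: $D(V)^G$ acts on each isotypic component of $(G,\C[V])$, and by Schur-type arguments $D(V)^G$ is commutative if and only if each isotypic component of $(G,\C[V])$ is ``small'' in the appropriate sense --- precisely, Knop showed $D(V)^G$ is commutative iff $(G,V)$ is multiplicity free. One direction (MF $\Rightarrow$ commutative) can be sketched: if $(G,V)$ is $MF$, decompose $\C[V]=\bigoplus_\lambda V_\lambda$ with distinct irreducibles; any $D\in D(V)^G$ preserves this decomposition and acts as a scalar on each $V_\lambda$ by Schur's lemma (since $D(V)^G$-invariance forces $D|_{V_\lambda}$ to be $G$-equivariant, hence scalar), so $D(V)^G$ embeds in a product of copies of $\C$, which is commutative. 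The converse is the substantive direction and I would simply cite Knop. I would state both directions and reference \cite{Howe-Umeda},\cite{Knop},\cite{Benson-Ratcliff-survey}.

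\medskip

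\textbf{Part 3.} For the duality statement, the quickest route uses Part~1: $(G,V)$ is $MF$ $\iff$ $(B,V)$ is a $PV$ for every Borel $B$. If $B$ is a Borel, so is its image under any automorphism, and replacing a Borel $B$ by the opposite Borel $B^-$ (conjugate to $B$), one checks $(B,V)$ is a $PV$ $\iff$ $(B,V^*)$ is a $PV$: a solvable connected group acting on $V$ has a dense orbit iff it has a dense orbit on $V^*$ (this is the standard ``castling/transpose'' fact for solvable groups, and also follows from $\C[V]^U$ being a polynomial ring being a condition symmetric in $V$, $V^*$ once one notes $\C[V^*]^{U^-}$ plays the symmetric role). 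Hence $MF$ is preserved under $V\mapsto V^*$. For the more general assertion with $W\oplus V$: the same argument applies verbatim to the representation on $W\oplus V$, dualizing only the $V$-summand, because a Borel has a dense orbit on $W\oplus V$ iff it has one on $W\oplus V^*$ (again using $\C[W\oplus V]^U=\C[W]^U\otimes\C[V]$-type reasoning and the symmetry in the second factor); I would spell out that the relevant polynomial-ring condition is unchanged.

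\medskip

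\emph{Main obstacle.} The genuinely hard content is the converse direction of Part~2 (commutativity of $D(V)^G$ $\Rightarrow$ $MF$), which is Knop's theorem and which I would not reprove --- I would cite it. The only mildly delicate elementary point is the $V\leftrightarrow V^*$ symmetry of the ``$(B,V)$ is a $PV$'' condition; I expect to handle it by passing to the characterization via $\C[V]^U$ being a polynomial algebra and observing this is manifestly symmetric, or alternatively by the elementary fact that a connected solvable group acting linearly has a dense orbit on $V$ iff on $V^*$. All other steps are short applications of Schur's lemma and complete reducibility.
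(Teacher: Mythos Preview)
Your treatment of Parts 1 and 2 matches the paper's: both are handled by citation (Vinberg--Kimelfeld for Part~1, Howe--Umeda for Part~2), and your added sketch of the easy direction of Part~2 via Schur's lemma is exactly the remark the paper makes right after the proof. One minor point: the paper attributes the hard direction of Part~2 to Howe--Umeda (Proposition~7.1), not to Knop.

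For Part 3, however, your route differs from the paper's and contains a genuine gap. The paper argues directly: the $G$-equivariant isomorphism $\C^{i}[V^*]\simeq (\C^{i}[V])^*$ (which is just $S^i(V)\simeq (S^i(V^*))^*$) shows immediately that multiplicity-freeness of $\C[V]$ is equivalent to that of $\C[V^*]$, since dualizing is a bijection on irreducibles. For the $W\oplus V$ statement the paper simply cites Leahy. Your argument instead goes through Part~1 and asserts that ``a solvable connected group acting on $V$ has a dense orbit iff it has a dense orbit on $V^*$.'' This is \emph{false} in general: take the $2$-dimensional solvable group
\[
B=\Bigl\{\begin{pmatrix} t & u\\ 0 & 1\end{pmatrix}: t\in\C^*,\ u\in\C\Bigr\}
\]
acting standardly on $\C^2$. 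The orbits in $\C^2$ are the origin, the punctured axis $\C^*\times\{0\}$, and the affine lines $\{y=c\}$ for $c\neq 0$; none is dense. But on the dual (action by inverse transpose) the orbit of $(1,0)$ is $\C^*\times\C$, which is dense. So your ``standard castling/transpose fact'' does not hold for arbitrary connected solvable groups.

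It is true that when $B$ is a Borel of a reductive $G$ and $V$ is the restriction of a $G$-module, $(B,V)$ is a PV iff $(B,V^*)$ is a PV --- but the clean way to see this is precisely the paper's direct argument (or, if you insist on avoiding it, the Chevalley involution of $G$, which sends $B$ to $B^-$ and twists $V$ into $V^*$; this is what your ``opposite Borel'' remark is groping toward, but you never actually use $B^-$ in a way that connects $(B,V)$ to $(B,V^*)$). As written, your argument for Part~3 either invokes a false general statement or is circular. Replace it with the one-line isomorphism $\C^{i}[V^*]\simeq (\C^{i}[V])^*$.
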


\begin{proof} (Indications)

Part $1)$ is due to Vinberg and Kimelfeld (\cite{Vinberg-Kimelfeld}), another proof can be found in \cite{Knop} .
Part $2)$  is due to Howe and Umeda (\cite{Howe-Umeda}, Proposition 7.1). The first assertion of Part $3)$, also noted in \cite{Howe-Umeda}, is a consequence of the $G$-equivariant isomorphism $  \C^{i}[V^*]\simeq (\C^{i}[V])^*$. The second assertion of $3)$ is Corollary 3.3 in \cite{Leahy}.

\end{proof}
 Note that the commutativity of $D(V)^G$ for a $MF$ space is just a consequence of the definition, since we have a simultaneaous diagonalization of all the operators in $D(V)^G$.
 \vskip 5pt

If $(G,V)$ is a $MF$ space, and if $B$ is a Borel subgroup of $G$, then, as we have seen $(G,V)$ and $(B,V)$ are prehomogeneous spaces. Let us  denote by $\Delta_{0},\Delta_{1}, \dots,\Delta_{k}, \dots,\Delta_{r}$  the fundamental relative invariants invariants of the $PV $ $(B,V)$, indexed in such a way that $\Delta_{0}, \Delta_{1},\dots,\Delta_{k}$ are the fundamental relative invariants of the $PV$ $(G,V)$ and such that the other invariants are ordered by decreasing degree. It is worthwhile noticing that at least $\Delta_{r}$ is of degree one as the highest weight vectors of the irreducible components of $V^*$ must occur. The nonnegative integer $r+1$ (= the number of fundamental relative invariants under $B$) is called the {\it rank} of the $MF$ space $(G,V)$.
\vskip 15pt

   \subsection{Multiplicity free spaces with a      one dimensional quotient}\hfill

   Let us now  define the main objects this paper deals with.
  \begin{definition}\label{def-MF}
   {\rm (T. Levasseur \cite{Levasseur})}\hfill
   
   $1)$ A prehomogeneous vector space $(G,V)$  is said to be of rank one \footnote{It is worth noticing that if $(G,V)$ is  multiplicity free, then its  rank as a $PV$ is not at all the same as its rank as a $MF$ space.} if there exists an homogeneous polynomial $\Delta_{0}$ on $V$ such that $\Delta_{0}\notin \C[V]^G$ and such that $\C[V]^{G'}=\C[\Delta_{0}]$.

  $2)$ A multiplicity free space $(G,V)$   is said to have a one-dimensional quotient if it is a $PV$ of rank one. (This implies that  the categorical quotient $V/\kern -1mm{/}G'$ has dimension $1$.)
 \end{definition}   
 
 Although the following result is implicit in \cite{Levasseur} we provide a proof here.
 
 \begin{prop}\label{prop-pv-rang-1}\hfill

 If $(G,V)$ is a $PV$ of rank one, then
  the polynomial $\Delta_{0}$ is the unique fundamental relative invariant of $(G,V)$. More precisely a $PV$ $(G,V)$ is of rank one if and only if it has a unique fundamental relative invariant.
 \end{prop}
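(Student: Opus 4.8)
The plan is to identify $\C[V]^{G'}$ with the subalgebra of $\C[V]$ generated by the fundamental relative invariants $P_1,\dots,P_k$ of $(G,V)$, to note that this subalgebra is in fact a polynomial algebra in the $k$ algebraically independent homogeneous generators $P_1,\dots,P_k$, and then to observe that a graded polynomial algebra in $k$ variables is generated by a single homogeneous element if and only if $k=1$. To set up the identification I would first recall the dictionary between $G'$-invariant polynomials and relative invariants. Since $G'=[G,G]$, every rational character of $G$ is trivial on $G'$, so every relative invariant of $(G,V)$ lies in $\C[V]^{G'}$; conversely $G'$ is normal in $G$ and ($G$ being reductive here) $T:=G/G'$ is a torus, so decomposing the graded algebra $\C[V]^{G'}$ into $T$-weight spaces exhibits every element of $\C[V]^{G'}$ as a sum of relative invariants of $(G,V)$. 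Each nonzero $T$-weight space is one-dimensional and homogeneous: two relative invariants with the same character have a $G$-invariant rational quotient, which is constant because the field of rational invariants of a $PV$ reduces to $\C$ (any such invariant is constant on the dense open orbit). By the factorization result recalled above, the nonzero relative invariants that are polynomials are precisely the monomials $c\,P_1^{n_1}\cdots P_k^{n_k}$ with all $n_i\ge 0$ --- a negative exponent would create a pole, the $P_i$ being distinct irreducible polynomials --- so $\C[V]^{G'}=\C[P_1,\dots,P_k]$, and the one-dimensionality of the weight spaces together with unique factorization forces the characters of the $P_i$ to be $\Z$-linearly independent, whence the $P_i$ are algebraically independent and $\C[V]^{G'}$ is a genuine polynomial ring of Krull dimension $k$.

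Granting this, both implications are immediate. If $(G,V)$ has a unique fundamental relative invariant, i.e. $k=1$, then $\C[V]^{G'}=\C[P_1]$ with $P_1$ homogeneous, non-constant (an irreducible polynomial has positive degree) and not $G$-invariant (a non-constant $G$-invariant polynomial would contradict the triviality of the rational invariants), so $\Delta_0:=P_1$ witnesses that $(G,V)$ is of rank one. Conversely, if $\C[V]^{G'}=\C[\Delta_0]$ with $\Delta_0$ homogeneous and $\Delta_0\notin\C[V]^G$, then $\Delta_0$ is non-constant, so $\C[V]^{G'}$ has Krull dimension $1$ and hence $k=1$; moreover each $P_i$ lies in $\C[V]^{G'}=\C[\Delta_0]$, and since $P_i$ and $\Delta_0$ are homogeneous of positive degree this forces $P_i=c_i\Delta_0^{m_i}$ with $m_i\ge 1$, so $m_i=1$ by irreducibility of $P_i$; thus $P_1=c_1\Delta_0$, i.e. $\Delta_0$ is, up to a nonzero scalar, the unique fundamental relative invariant.

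The one genuinely substantive step is the structural identity $\C[V]^{G'}=\C[P_1,\dots,P_k]$ together with the algebraic independence of the $P_i$, and this is precisely where prehomogeneity enters, through the triviality of the field of rational invariants; it yields both the monomial description of polynomial relative invariants and the one-dimensionality of the $T$-weight spaces. Everything after that is elementary algebra of graded polynomial rings, so I anticipate no real obstacle beyond careful bookkeeping with the $G/G'$-action and the sign conventions in the $G$-action on $\C[V]$.
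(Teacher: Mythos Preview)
Your argument is correct. The approach differs from the paper's in organization rather than in the underlying ideas: you first establish once and for all the structural identity $\C[V]^{G'}=\C[P_1,\dots,P_k]$ as a polynomial ring in the fundamental relative invariants (using the $T=G/G'$-weight decomposition and the triviality of rational $G$-invariants on a PV), and then both implications drop out from a Krull-dimension count. The paper instead treats the two implications separately and more concretely: for the forward direction it shows directly that $\Delta_0$ is a relative invariant (via the action of the central torus and a degree comparison), then that it is irreducible (otherwise its irreducible factors, being relative invariants of smaller degree, could not lie in $\C[\Delta_0]$); for the converse it takes an arbitrary homogeneous $G'$-invariant $P$, decomposes the $G'$-trivial isotypic component of $\C[V]^n$ into $C$-character spaces, observes that each piece $P_\chi$ is a relative invariant and hence a scalar multiple of a power of $\Delta_0$, and concludes $P\in\C[\Delta_0]$. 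Your route is more conceptual and yields the general fact that $\C[V]^{G'}$ is a polynomial ring for any reductive PV, at the cost of invoking slightly more machinery (algebraic independence of the $P_i$ via $\Z$-linear independence of their characters); the paper's route is more elementary and self-contained but tailored to the $k=1$ situation. Both tacitly use that $G$ is reductive so that $G/G'$ is a torus, which is the relevant setting here.
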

 
 \begin{proof} We can write $G=G'C$ where $G'=[G,G]$ is the derived group of $G$ and where $C=Z(G)^\circ\simeq (\C^*)^p$ is the connected component of the center of $G$. Let $g\in C$. Then $\Delta_{0}(g^{-1}x)$ is again $G'$-invariant. As $\C[V]^{G'}=\C[\Delta_{0}]$ and as $\Delta_{0}(g^{-1}x)$ has the same degree as $\Delta_{0}$ we obtain that $\Delta_{0}(g^{-1}x)=\chi(g)\Delta_{0}(x)$ with $\chi(g)\in \C^*$. Therefore $\Delta_{0}$ is a relative invariant. Suppose that $\Delta_{0}$ is not irreducible. Then $\Delta_{0}=P_{1}\dots P_{m}$, where the polynomials $P_{i}$ are irreducible relative invariants and $\partial^\circ(P_{i})<\partial^¡(\Delta_{0})$. We should have $P_{i}\in \C[\Delta_{0}]$, which is impossible. Hence $\Delta_{0}$ is irreducible. If $f$ is another fundamental relative invariant then we would have $f\in \C[\Delta_{0}]$ which is impossible. 
 
 It remains to prove that if a $PV$ $(G,V)$ has a unique fundamental relative invariant $\Delta_{0}$ then it is of rank one. As $\Delta_{0}$ is non constant we have of course that $\Delta_{0}\notin \C[V]^G$. Let $P\in \C[V]^{G'}$. If $P=P_{0}+P_{1}+\dots+P_{m}$ whith  $\partial^\circ(P_{i})=i$,  then each $P_{i} $ is $G'$-invariant.  Therefore we can suppose that $P$ has fixed degree $n$ (i.e. $P\in \C[V]^{G'}\cap \C[V]^n$).
 
 Let
 $$\C[V]^n=\bigoplus_{i=0}^p M_{i}$$
 be the decomposition of $\C[V]^n$ into $G'$-isotypic components. We suppose that $M_{0}$ is the isotypic component of the trivial $G'$-module ($M_{0}=\C[V]^{G'}\cap \C[V]^n$). Hence $P\in M_{0}$. As $G=CG'$, the group $G$ stabilizes each $M_{i}$. Therefore we can write
 $$M_{0}=\bigoplus M_{\chi}$$
 where the $G$-isotypic components $M_{\chi}$ of $M_{0}$ are indexed by characters $\chi$ of $G$ and given by 
 $$M_{\chi}=\{\varphi\in M_{0} \,|\, \varphi(z^{-1}x)=\chi(z)\varphi(x), \forall z\in C, x\in V\}.$$
Hence $P=\sum P_{\chi}, \, P_{\chi}\in M_{\chi}$, and for $z\in C, g'\in G'$ and $x\in V$ we have $P_{\chi}(zg'x)=\chi^{-1}(z)P_{\chi}(g'x)=\chi^{-1}(z)P_{\chi}(x)$. Therefore each $P_{\chi} $ is a relative invariant. But   $(G,V)$ has a unique fundamental relative invariant namely $\Delta_{0}$. Hence $P_{\chi}=c_{\chi}\Delta_{0}^j$ ($c_{\chi}\in \C$). The exponent $j$ does not depend on $\chi$, since all the $P_{\chi}$'s have the same degree. Therefore all the characters $\chi $ are the same, namely $\chi= \lambda_{0}^j$ where $\lambda_{0}$ is the character of $\Delta_{0}.$ This implies that $M_{0}=M_{\lambda_{0}^j}$, and that $P=c\Delta_{0}^j$. Hence $\C[V]^{G'}=\C[\Delta_{0}]$.
 
 \end{proof}

The following result gives a criterion to decide whether or not a $PV$ has rank one. It   will be useful in section $4$ for the classification of  the $MF$ spaces with a one dimensional quotient.

\begin{prop}\label{prop-isotropie-rang1}\hfill

Let $G$ be a connected algebraic group and let   $(G,V)$ be a $PV$. We suppose that $V=V_{1}\oplus V_{2}$ where $V_{1}$ and $V_{2}$ are $G$-stable subspaces, and that $(G,V_{1})$ is a rank one $PV$. Let $(x_{0},y_{0})$ be a generic element in $V$, with $x_{0}\in V_{1}$ and $y_{0}\in V_{2}$.  Let $G_{x_{0}}$ be the isotropy subgroup of $x_{0}$.  We suppose also that the prehomogeneous vector space $(G_{x_{0}}, V_{2})$ has no nontrivial relative invariant $($this property does not depend on the choice of $(x_{0},y_{0})$$)$. Then $(G,V)$ is a rank one $PV$.

\end{prop}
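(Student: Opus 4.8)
The strategy is to apply Proposition~\ref{prop-pv-rang-1}: it suffices to show that $(G,V)$ has a unique fundamental relative invariant. Since $(G,V_1)$ is a rank one $PV$, it has (by Proposition~\ref{prop-pv-rang-1}) a unique fundamental relative invariant, call it $\Delta_0$, which we view as a polynomial on $V=V_1\oplus V_2$ via the projection onto $V_1$. This is certainly a relative invariant of $(G,V)$, so the point is to prove that it is the \emph{only} one, i.e.\ that the codimension-one irreducible components of the singular set of $(G,V)$ reduce to the single hypersurface $\{\Delta_0=0\}$. Equivalently, in view of Remark~\ref{rem-caracteres}, it is enough to show that every rational character of $G$ which is trivial on the generic isotropy subgroup $G_{(x_0,y_0)}$ is a power of the character $\lambda_0$ of $\Delta_0$.

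First I would set up the standard ``slice'' argument. Let $\chi$ be the character of a relative invariant $f$ of $(G,V)$. Since $(G,V_1)$ has open orbit and $G_{x_0}$ acts on $V_2$ with open orbit (as $(x_0,y_0)$ is generic in $V=V_1\oplus V_2$, the orbit $G\cdot(x_0,y_0)$ is open, which forces $G_{x_0}\cdot y_0$ to be open in $V_2$), I would restrict $f$ to the affine subspace $\{x_0\}\times V_2$. The resulting function $y\mapsto f(x_0,y)$ is a rational relative invariant of $(G_{x_0},V_2)$: indeed for $h\in G_{x_0}$ one has $f(x_0,h\cdot y)=f(h\cdot(x_0,y))=\chi(h)f(x_0,y)$. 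By hypothesis $(G_{x_0},V_2)$ has no nontrivial relative invariant, so $\chi|_{G_{x_0}}$ is trivial and $y\mapsto f(x_0,y)$ is a nonzero constant $c$ (it is not identically zero because $f$ does not vanish on the open orbit, which meets $\{x_0\}\times V_2$).

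Next I would propagate this back to all of $V$. For a generic point $(x,y)=g\cdot(x_0,y_0)$ in the open orbit, $f(x,y)=\chi(g)f(x_0,y_0)=\chi(g)c$. So $f$ on the open orbit depends only on the coset $g\,G_{x_0}$, hence only on $x=g\cdot x_0$ in the open orbit $\Omega_1\subset V_1$; thus there is a rational function $\varphi$ on $V_1$ with $f(x,y)=\varphi(x)$ on the open orbit, and by density $f(x,y)=\varphi(x)$ as rational functions on $V$. Then $\varphi$ is a rational relative invariant of $(G,V_1)$ with character $\chi$. But $(G,V_1)$ is a rank one $PV$, so by Proposition~\ref{prop-pv-rang-1} its only fundamental relative invariant is $\Delta_0$, and every relative invariant of $(G,V_1)$ is of the form (constant)$\cdot\Delta_0^{\,n}$ with $n\in\Z$; hence $\chi=\lambda_0^{\,n}$. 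This shows the characters of relative invariants of $(G,V)$ are exactly the powers of $\lambda_0$, so $\Delta_0$ is the unique fundamental relative invariant of $(G,V)$, and Proposition~\ref{prop-pv-rang-1} gives that $(G,V)$ is of rank one. Finally, I would remark that the hypothesis on $(G_{x_0},V_2)$ is independent of the chosen generic point because any two generic points of $V$ are $G$-conjugate, and conjugation carries $(G_{x_0},V_2)$ to $(G_{x_0'},V_2)$ with the isomorphism matching up relative invariants.

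**Main obstacle.** The delicate point is the transition from ``$f$ restricted to each slice $\{x\}\times V_2$ is constant in $y$'' to ``$f$ is a pull-back from $V_1$'' — one must make sure the slice argument is uniform in $x$ over the open orbit, which is why I phrase it through the group action ($f(g\cdot(x_0,y_0))=\chi(g)c$) rather than slice by slice, and then invoke density of $\Omega$ in $V$ together with the fact that $f$ is rational. A secondary technical care is checking that $y\mapsto f(x_0,y)$ is genuinely not identically zero and genuinely rational (not merely defined on a subset), so that calling it a ``relative invariant of $(G_{x_0},V_2)$'' is legitimate; this follows because $\{x_0\}\times(G_{x_0}\cdot y_0)$ is open in $\{x_0\}\times V_2$ and meets $\Omega$.
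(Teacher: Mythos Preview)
Your proof is correct and follows essentially the same route as the paper: restrict a relative invariant to the slice $\{x_0\}\times V_2$, use the hypothesis on $(G_{x_0},V_2)$ to conclude it is constant in $y$, deduce that the invariant depends only on the $V_1$-variable, and finish via the rank-one property of $(G,V_1)$ together with Proposition~\ref{prop-pv-rang-1}. The only cosmetic difference is that the paper works directly with an irreducible polynomial relative invariant and concludes $\varphi=c\Delta_0$, whereas you phrase the conclusion through characters ($\chi=\lambda_0^{\,n}$); also, the paper passes from the slice at $x_0$ to all of $V$ by varying $x_0$ over generic points of $V_1$, while you propagate via the group action and density---which is exactly the care you flag in your ``main obstacle'' paragraph.
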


\begin{proof} 

For  the fact that $(G_{x_{0}},V_{2})$ is a $PV$ and that $y_{0} $ is generic for this $PV$ we refer to \cite{Rubenthaler-decomposition}.
As $(G,V_{1})$ is a rank one $PV$ it has a unique fundamental relative invariant $f(x)$ by Proposition \ref{prop-pv-rang-1}. Define $\Delta_{0}(x,y)=f(x)\, (x\in V_{1},y \in V_{2})$. Then, as it is irreducible,  $\Delta_{0}(x,y)$ is a fundamental relative invariant of $(G,V)$. Let $\varphi(x,y)$ be a fundamental relative invariant of $(G,V)$ and consider the function $y\longmapsto \varphi(x_{0},y)$. For $g\in G_{x_{0}}$, we have $\varphi(gx_{0},gy)=\varphi(x_{0},gy)=\chi_{_\varphi}(g)\varphi(x_{0},y)$. Hence $y\longmapsto \varphi(x_{0},y)$ is a relative invariant of $(G_{x_{0}},V_{2})$. But  by hypothesis $(G_{x_{0}},V_{2})$ has no nontrivial relative invariant, hence for all $y\in V_{2}$, we have $\varphi(x_{0},y)=\psi(x_{0})$ (constant with respect to $y$). But as this is true for any generic $x_{0} \in V_{1}$, we obtain that $\varphi(x,y)=\psi(x)$ , for all $x\in V_{1}$ and all $y\in V_{2}$. In other words $\varphi$ does only depend on the $x$ variable. As $\varphi$ is irreducible, so is also $\psi$. And $\psi$ is then a relative invariant of $(G,V_{1})$, hence $\psi=cf$ $(c\in \C)$, or equivalently $\varphi(x,y)=c\Delta_{0}(x,y)$. Then Proposition \ref{prop-pv-rang-1} implies that $(G,V)$ is a rank one $PV$.

\end{proof}

{\bf Notation:} If $(G,V)$ is a $MF$ space with a one dimensional quotient, we will sometimes say that $(G,V)$ is $QD1$.
\subsection{An exemple: the regular commutative PV's of  parabolic type}\hfill

\vskip 5pt

Among the $PV$'s of parabolic type there is a family, the so-called  regular commutative $PV$'s of parabolic type, which are MF spaces with a one dimensional quotient. We will   give here a brief description of these objects.
Notations and conventions are the same as in section $2.2$. The $PV$'s of parabolic type we are going to describe are irreducible. Therefore there is only one circled root which we denote by $\alpha_{0}$ (and then $\theta=\Psi\setminus \{\alpha_{0}\}$). In this section we will impose the extra condition that the coefficient of $\alpha_{0}$ in the highest root is $1$. This implies that $d_{p}(\theta)=\{0\}$ for $p\neq 0,1,-1$. Hence ${\go p}_{\theta}= {\go l}_{\theta}\oplus d_{1}(\theta)$, and the nilradical $d_{1}(\theta)$ of ${\go p}_{\theta}$ is a commutative subalgebra. Therefore the spaces $({\go l}_{\theta},d_{1}(\theta))$ are called  commutative $PV$'s of parabolic type. It is known that these $PV$'s are all MF spaces (\cite{Muller-Rubenthaler-Schiffmann}). By Proposition \ref{prop-pv-rang-1}   those which have a one dimensional quotient are exactly those which have a non trivial relative invariant. From \cite{Muller-Rubenthaler-Schiffmann}   these are also exactly those which are regular, and the list is given in Table 1 below. As they are irreducible, and as the center of ${\go l}_{\theta}$ is one-dimensional, these spaces are automatically indecomposable and saturated (see Definition \ref{equiv-MF} below).  
$$\begin{array}{c}
 \text{\bf Table 1}\\
 \text{\bf Regular PV's of commutative parabolic type}\\
 \\
\begin{array}{|c|c|c|c|}
 
\hline
 &{\go g}& {\go l}_{\theta} & d_{1}(\theta)\\
\hline 
 A_{2n+1}&\hbox{\unitlength=0.5pt
\hskip-100pt \begin{picture}(400,30)(0,10)
\put(90,10){\circle*{10}}
\put(85,-10){$\alpha_1$}
\put(95,10){\line (1,0){30}}
\put(130,10){\circle*{10}}
 
\put(140,10){\circle*{1}}
\put(145,10){\circle*{1}}
\put(150,10){\circle*{1}}
\put(155,10){\circle*{1}}
\put(160,10){\circle*{1}}
\put(165,10){\circle*{1}}
\put(170,10){\circle*{1}}
\put(175,10){\circle*{1}}
\put(180,10){\circle*{1}}
 \put(195,10){\circle*{10}}
 
\put(195,10){\line (1,0){30}}
\put(230,10){\circle*{10}}
\put(220,-10){$\alpha_{n+1}$}
\put(230,10){\circle{18}}
\put(235,10){\line (1,0){30}}
\put(270,10){\circle*{10}}
 
\put(280,10){\circle*{1}}
\put(285,10){\circle*{1}}
\put(290,10){\circle*{1}}
\put(295,10){\circle*{1}}
\put(300,10){\circle*{1}}
\put(305,10){\circle*{1}}
\put(310,10){\circle*{1}}
\put(315,10){\circle*{1}}
\put(320,10){\circle*{1}}
\put(330,10){\circle*{10}}
 
\put(335,10){\line (1,0){30}}
\put(370,10){\circle*{10}}
\put(360,-10){$\alpha_{2n+1}$}
\end{picture} \hskip -50pt
} 
&{\go s}{\go l}(n+1)\times{\go g}{\go l}(n+1)&M_{n}\\
&&&\\
\hline
B_{n}&\hbox{\unitlength=0.5pt
\hskip -100pt\begin{picture}(300,30)(-82,0)
\put(10,10){\circle*{10}}
\put(10,10){\circle{18}}
\put(15,10){\line (1,0){30}}
\put(50,10){\circle*{10}}
\put(55,10){\line (1,0){30}}
\put(90,10){\circle*{10}}
\put(95,10){\line (1,0){30}}
\put(130,10){\circle*{10}}
\put(135,10){\circle*{1}}
\put(140,10){\circle*{1}}
\put(145,10){\circle*{1}}
\put(150,10){\circle*{1}}
\put(155,10){\circle*{1}}
\put(160,10){\circle*{1}}
\put(165,10){\circle*{1}}
\put(170,10){\circle*{10}}
\put(174,12){\line (1,0){41}}
\put(174,8){\line (1,0){41}}
\put(190,5.5){$>$}
\put(220,10){\circle*{10}}
\put(240,7){$B_{n}\,(n\geq 2)$}
 \end{picture} 
}
&{\go s}{\go o}(2n-1)\times {\bb C}&{\bb C}^{2n-1}\\
\hline

 C_{n}&\hbox{\unitlength=0.5pt
\hskip -100pt\begin{picture}(300,30)(-82,0)
\put(10,10){\circle*{10}}
\put(15,10){\line (1,0){30}}
\put(50,10){\circle*{10}}
\put(55,10){\line (1,0){30}}
\put(90,10){\circle*{10}}
\put(95,10){\circle*{1}}
\put(100,10){\circle*{1}}
\put(105,10){\circle*{1}}
\put(110,10){\circle*{1}}
\put(115,10){\circle*{1}}
\put(120,10){\circle*{1}}
\put(125,10){\circle*{1}}
\put(130,10){\circle*{1}}
\put(135,10){\circle*{1}}
\put(140,10){\circle*{10}}
\put(145,10){\line (1,0){30}}
\put(180,10){\circle*{10}}
\put(184,12){\line (1,0){41}}
\put(184,8){\line(1,0){41}}
\put(200,5.5){$<$}
\put(230,10){\circle*{10}}
\put(230,10){\circle{18}}
\put(250, 6){$C_{n}$}
 
\end{picture}
}
&{\go g}{\go l}(n)&Sym(n)\\
 \hline
D_{n}^1&\hbox{\unitlength=0.5pt
\hskip -90pt\begin{picture}(340,40)(-82,8)
\put(10,10){\circle*{10}}
\put(10,10){\circle{18}}
\put(15,10){\line (1,0){30}}
\put(50,10){\circle*{10}}
\put(55,10){\line (1,0){30}}
\put(90,10){\circle*{10}}
\put(95,10){\line (1,0){30}}
\put(130,10){\circle*{10}}
\put(140,10){\circle*{1}}
\put(145,10){\circle*{1}}
\put(150,10){\circle*{1}}
\put(155,10){\circle*{1}}
\put(160,10){\circle*{1}}
\put(170,10){\circle*{10}}
\put(175,10){\line (1,0){30}}
\put(210,10){\circle*{10}}
\put(215,14){\line (1,1){20}}
\put(240,36){\circle*{10}}
\put(215,6){\line (1,-1){20}}
\put(240,-16){\circle*{10}}
\put(260,6){$D_{n}\,(n\geq 4)$}
 \end{picture}
}
&{\go s}{\go o}(2n-2)\times {\bb C}&{\bb C}^{2n-2}\\

&&&\\
\hline
 D_{2n}^2  &\hbox{\unitlength=0.5pt
\hskip-65pt\begin{picture}(400,35)(-82,10)
\put(10,0){\circle*{10}}
\put(15,0){\line (1,0){30}}
\put(50,0){\circle*{10}}
\put(55,0){\line (1,0){30}}
\put(90,0){\circle*{10}}
\put(95,0){\line (1,0){30}}
\put(130,0){\circle*{10}}
\put(140,0){\circle*{1}}
\put(145,0){\circle*{1}}
\put(150,0){\circle*{1}}
\put(155,0){\circle*{1}}
\put(160,0){\circle*{1}}
\put(170,0){\circle*{10}}
\put(175,0){\line (1,0){30}}
\put(210,0){\circle*{10}}
\put(215,4){\line (1,1){20}}
\put(240,26){\circle*{10}}
\put(240,26){\circle{18}}
\put(215,-4){\line (1,-1){20}}
\put(240,-26){\circle*{10}}
\put(250,0){$D_{2n}$ ($ n\geq 2$)}
\end{picture}
}
&{\go g}{\go l}(2n)&AS(2n)\\
&&&\\
&&&\\
\hline
 E_7&\hbox{\unitlength=0.5pt
 \begin{picture}(350,35)(-82,-10)
\put(10,0){\circle*{10}}
\put(15,0){\line  (1,0){30}}
\put(50,0){\circle*{10}}
\put(55,0){\line  (1,0){30}}
\put(90,0){\circle*{10}}
\put(90,-5){\line  (0,-1){30}}
\put(90,-40){\circle*{10}}
\put(95,0){\line  (1,0){30}}
\put(130,0){\circle*{10}}
\put(135,0){\line  (1,0){30}}
\put(170,0){\circle*{10}}
\put(175,0){\line (1,0){30}}
\put(210,0){\circle*{10}}
\put(210,0){\circle{18}}
\put(240,-4){$E_7$}
\end{picture}
}
 &E_{6}\times {\bb C}&{\bb C}^{27}\\
&&&\\
&&&\\
\hline
\end{array}
\end{array}$$
\vskip 10pt


\section{The classification}
 
Let us now  explain the classification of $MF$ spaces with a one dimensional quotient. We begin to describe briefly the classification of all  $MF$ spaces.  Kac (\cite{Kac}) determined all the cases where the representation $(G,V)$ is irreducible. Brion (\cite{Brion}) did the case where $G'=[G,G]$ is (almost) simple. Finally Benson-Ratcliff and Leahy did the rest, independently (\cite{Benson-Ratcliff-survey}, \cite{Benson-Ratcliff-article},\cite{Leahy},\cite{Knop}). 

\begin{definitionsection}\label{equiv-MF}$($see \cite{Knop} $)$\hfill

$1)$ Two representations $(G_{1},\rho_{1},V_{1})$ and $(G_{2},\rho_{2},V_{2})$ are called {\it geometrically equivalent} if there is an isomorphism $\Phi:V_{1}\longmapsto V_{2}$ such that $\Phi(\rho_{1}(G_{1}))\Phi^{-1}= \rho_{2}(G_{2})$.

$2)$ A representation $(G,V)$ is called {\it decomposable} if it is geometrically equivalent to a representation of the form $(G_{1}\times G_{2}, V_{1}\oplus V_{2}$, where $V_{1}$ and $V_{2}$ are non-zero. It is called {\it  indecomposable} if it is not decomposable.

$3)$ A representation $(G,V)$ is called {\it saturated} if the dimension of the center of $\rho(G)$ is equal to the number of irreducible summands of $V$.
\end{definitionsection}

\begin{remsection}\label{rem-geom-equ}\hfill

The notion of geometric equivalence is quite  natural, once one has remarked that the notion of $MF$ space depends only on $\rho(G)$.  Is is worthwhile pointing out that  any representation is geometrically equivalent to its dual representation (see Theorem \ref{th-caracterisation-MF}). Finally note that any representation can be made saturated by adding a torus.

\end{remsection}

\begin{theoremsection}\label{th-classification} The  complete list, up to geometric equivalence, of indecomposable saturated $MF$ spaces with a one dimensional quotient is given by Table $2$ (irreducibles) and Table $3$ (non irreducibles) at the end of the paper.
\end{theoremsection}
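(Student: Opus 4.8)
The plan is to run through the Benson--Ratcliff list of all indecomposable saturated $MF$ spaces and select, case by case, those which are of rank one as a $PV$, using Proposition \ref{prop-pv-rang-1} as the operative criterion: a $MF$ space $(G,V)$ is $QD1$ exactly when the $PV$ $(G,V)$ has a unique fundamental relative invariant, equivalently (by Remark \ref{rem-caracteres}) when the group of characters of $(G,V)$ is one-dimensional and is generated by the character of a \emph{single} irreducible relative invariant. Since the spaces we want are saturated and indecomposable, the dimension of the center of $\rho(G)$ equals the number $m$ of irreducible summands of $V$; hence the character group of $(G,V)$ has rank exactly $m$ in the generic case, and the number of fundamental relative invariants of $(G,V)$ is at most $m$. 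The rank-one condition therefore forces strong constraints: for irreducible $V$ ($m=1$) one simply needs that the (unique possible) relative invariant actually exists, i.e. that the generic isotropy is reductive (regularity) — this recovers Kac's / Levasseur's irreducible list and, via Section 2.5, the regular commutative parabolic cases of Table 1 together with the few extra irreducibles; for $m\geq 2$ one needs $m-1$ of the natural relative invariants to degenerate, which happens only for very particular small representations.

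\textbf{Key steps, in order.} First I would reduce to the indecomposable saturated case outright: this is legitimate because adding a torus makes any $MF$ space saturated without changing whether it is $QD1$ (the property $\C[V]^{G'}=\C[\Delta_0]$ only involves $G'$ and $V$), and decomposable spaces are handled by their indecomposable factors — more precisely, a direct sum $(G_1\times G_2, V_1\oplus V_2)$ is of rank one only if one factor is a $PV$ with \emph{no} nonconstant invariant and the other is rank one, a situation that does not produce genuinely new indecomposable entries. Second, I would organize the Benson--Ratcliff list by the structure of $G'$: (a) $G'$ (almost) simple, which is Brion's sublist; (b) $G'$ a product of two or more simple factors. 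Third, for each entry I compute the fundamental relative invariants of $(G,V)$ explicitly — these are the classical invariants: determinants and Pfaffians of the matrix blocks, the $E_7$ quartic, norm forms on $\C^n$, etc. — count them, and keep the entry precisely when the count is one. For the iterated / block cases I would use Proposition \ref{prop-isotropie-rang1}: writing $V=V_1\oplus V_2$ with $(G,V_1)$ already known to be rank one, the whole space is rank one iff the sliced $PV$ $(G_{x_0},V_2)$ has no nonconstant relative invariant, and this sliced $PV$ can be identified by the standard descent of parabolic/prehomogeneous data. Fourth, once the list is assembled, I would compute for each surviving entry the extra structural data promised in the introduction: whether it is regular (isotropy of a generic point reductive / singular set a hypersurface), whether it is of parabolic type (match against the weighted Dynkin diagram classification of Section 2.2, cf. Table 1 and Remark \ref{rem-diagram}), and write down the fundamental relative invariant(s) of the underlying $PV$ under $B$.

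\textbf{Main obstacle.} The genuine difficulty is not any single case but the bookkeeping for the non-irreducible entries: one must, for \emph{every} reducible indecomposable saturated $MF$ space on the Benson--Ratcliff list, decide whether the several natural relative invariants are algebraically independent or whether coincidences/degeneracies collapse them to one. This requires an honest computation of $\C[V]^{G'}$ (or at least of its Krull dimension and its generators) in each case; the slice argument of Proposition \ref{prop-isotropie-rang1} controls it, but identifying the slice representation $(G_{x_0},V_2)$ correctly — especially keeping track of the torus part of the isotropy and of which characters survive — is where care is needed, and where the classification genuinely uses the fine structure of prehomogeneous vector spaces rather than just representation-theoretic multiplicity-freeness. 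A secondary but real obstacle is matching the surviving reductive cases against the parabolic-type list: this uses the regularity criterion for parabolic $PV$'s and the $b$-function / castling considerations from the $PV$ literature to certify ``regular'' and ``parabolic'' (or to exhibit an entry that is $QD1$ yet \emph{not} of parabolic type), which is the content of the ``parabolicity, regularity'' claims advertised for Tables 2 and 3.
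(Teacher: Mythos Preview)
Your strategy is essentially the paper's own: run through the Benson--Ratcliff list, use Proposition \ref{prop-pv-rang-1} to reduce $QD1$ to ``exactly one fundamental relative invariant,'' and for the reducible entries apply the slice criterion of Proposition \ref{prop-isotropie-rang1} together with Remark \ref{rem-caracteres} to count characters surviving modulo $[G,G]$ and the generic isotropy. The paper organizes the cases by irreducible versus non-irreducible $V$ rather than by the structure of $G'$, but the computations are the same; your remark that the irreducible case amounts to regularity (existence of \emph{some} relative invariant, since irreducibility already forces uniqueness) is a correct shortcut that the paper verifies case by case rather than stating as a principle. Two small slips to fix: the exceptional cubic invariant is the Freudenthal cubic on the $27$-dimensional $E_6$-module (inside $E_7$), not an ``$E_7$ quartic''; and the preliminary paragraph about reducing to the indecomposable saturated case is unnecessary, since the statement is already restricted to those.
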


\section{Proof}

 This section is devoted to the proof of Theorem \ref{th-classification}.  The classification tables show that indecomposable saturated $MF$ spaces are either irreducible (see for example Table I p. 153 in \cite{Benson-Ratcliff-article}) or they have two irreducible summands (see Table II p. 154 in \cite{Benson-Ratcliff-article}). Using Proposition \ref{prop-pv-rang-1}, we have only to decide whether or not a given  $MF$-spaces  has a unique fundamental invariant. 
  
   For the irreducible $MF$ spaces we have checked this by a case by case computation. For the non-irreducible $MF$ spaces, we also check which one have a one dimensional quotient by a case by case examination, using sometimes the criterion given by Proposition \ref{prop-isotropie-rang1}. 
   
   We also indicate whether or not these spaces are regular of of parabolic type as a prehomogeneous vector spaces.

\subsection{Irreducible MF spaces}\hfill

As said above we examine case by case the $MF$  spaces occuring   in Table 1 of the paper by Benson-Ratcliff  (\cite{Benson-Ratcliff-article}). Of course the similar tables of Leahy (\cite{Leahy}) or of Knop (\cite{Knop}) could have been used instead. Except that  we explicitly mention the center $\C^*$,  we adopt the  notations of Benson and Ratcliff. The notations  $SL(n), SO(n),Sp(n)$ will not only stand for the  groups but also  for the natural representation of the corresponding group on $\C^n, \C^n, \C^{2n}$ respectively. The notation $S^2(SL(n))$ denotes the "natural" representation of $SL(n)$ of the space $Sym(n)$ of symmetric matrices of size $n$, whereas 
$\Lambda^2(SL(n))$ stands for the "natural" representation of $SL(n)$ on the space $AS(n)$ of skew-symmetric matrices of size $n$. Also $G_{2}$ stands for the $7$-dimensional representation of $G_{2}$, and $E_{6}$ denotes the $27$-dimensional representation of $E_{6}$. The notation $H^*$  denotes the contragredient representation of the representation $H$ and the notation $H_{1}\otimes H_{2}$ denotes the tensor product of the corresponding representations of $H_{1}$ and $H_{2}$.    

Recall also that we say that a $MF$ space is $QD1$ if it has a one dimensional quotient.
\vskip5pt
\subsubsection{} $ SL(n) \times   \C^*  (n\geq 1)$.

It is well known that, for $n>1$, this representation has no non-trivial relative invariant. For $n=1$, the corresponding representation is $\C^*$ acting on $\C$, which has obviously one fundamental relative invariant, and hence is $QD1$. It is of commutative parabolic type, corresponding to the diagram $\hbox{\unitlength=0.5pt
 \begin{picture}(0,20)(205,-5).
 
\put(210,0){\circle*{10}}
\put(210,0){\circle{18}}
 
\end{picture}
}
$
\,\,. This is the case $m=n=1$ of 4.1.6. below. It is case $A_{1}$ in Table 1, and a particular case of $(4)$ in Table 2.
\vskip5pt
\subsubsection{} $SO(n)\times \C^* (n\geq3)$.	

 It is well known that the natural representation of $SO(n)\times \C^*$ on $\C^n$ has a unique fundamental relative invariant, namely the nondegenerate quadratic form $Q$. Therefore it is $QD1$. It is also well known that this space is a $PV$ of  commutative parabolic type, corresponding to the diagrams   \hbox{\unitlength=0.5pt
\hskip-30pt\begin{picture}(300,30)(-82,6)
\put(10,10){\circle*{10}}
\put(10,10){\circle{18}}
\put(15,10){\line (1,0){30}}
\put(50,10){\circle*{10}}
\put(55,10){\line (1,0){30}}
\put(90,10){\circle*{10}}
\put(95,10){\line (1,0){30}}
\put(130,10){\circle*{10}}
\put(135,10){\circle*{1}}
\put(140,10){\circle*{1}}
\put(145,10){\circle*{1}}
\put(150,10){\circle*{1}}
\put(155,10){\circle*{1}}
\put(160,10){\circle*{1}}
\put(165,10){\circle*{1}}
\put(170,10){\circle*{10}}
\put(174,12){\line (1,0){41}}
\put(174,8){\line (1,0){41}}
\put(190,5.5){$>$}
\put(220,10){\circle*{10}}
\put(240,7){$B_{p+1}$}
 \end{picture}\hskip 35pt} if $n=2p+1$ 
 and
  \hbox{\hskip -40pt\unitlength=0.5pt
\begin{picture}(340,40)(-82,5)
\put(10,10){\circle*{10}}
\put(10,10){\circle{18}}
\put(15,10){\line (1,0){30}}
\put(50,10){\circle*{10}}
\put(55,10){\line (1,0){30}}
\put(90,10){\circle*{10}}
\put(95,10){\line (1,0){30}}
\put(130,10){\circle*{10}}
\put(140,10){\circle*{1}}
\put(145,10){\circle*{1}}
\put(150,10){\circle*{1}}
\put(155,10){\circle*{1}}
\put(160,10){\circle*{1}}
\put(170,10){\circle*{10}}
\put(175,10){\line (1,0){30}}
\put(210,10){\circle*{10}}
\put(215,14){\line (1,1){20}}
\put(240,36){\circle*{10}}
\put(215,6){\line (1,-1){20}}
\put(240,-16){\circle*{10}}
\put(260,6){$D_{p+1} $}
 \end{picture}
\hskip 30pt} if $n=2p.$

\vskip 15pt
This corresponds   to case $(1)$ in Table 2.

\vskip5pt
\subsubsection{} $Sp(n)\times \C^* (n\geq 2)$.	

This $PV$ is of parabolic type (corresponding to the diagram \hbox{\unitlength=0.5pt
\begin{picture}(240,30)(-6,5)
\put(10,10){\circle*{10}}
\put(10,10){\circle{16}}
\put(15,10){\line (1,0){30}}
\put(50,10){\circle*{10}}
\put(55,10){\line (1,0){30}}
\put(90,10){\circle*{10}}
\put(95,10){\circle*{1}}
\put(100,10){\circle*{1}}
\put(105,10){\circle*{1}}
\put(110,10){\circle*{1}}
\put(115,10){\circle*{1}}
\put(120,10){\circle*{1}}
\put(125,10){\circle*{1}}
\put(130,10){\circle*{1}}
\put(135,10){\circle*{1}}
\put(140,10){\circle*{10}}
\put(145,10){\line (1,0){30}}
\put(180,10){\circle*{10}}
\put(184,12){\line (1,0){41}}
\put(184,8){\line(1,0){41}}
\put(200,5.5){$<$}
\put(230,10){\circle*{10}}
 
\end{picture}}). According to the table in    (\cite{rub-kyoto} or \cite{Rubenthaler-bouquin-PV})  it has no non-trivial relative invariant, hence it is not $QD1$.

\vskip5pt
\subsubsection{} $S^2(SL(n))\times \C^* (n\geq2)$.

 Up to geometric equivalence this is the representation of $GL(n)$ on $Sym(n)$ given by $g.X=gX{^tg}$ ($g\in GL(n), X\in Sym(n)$). This $PV$ is of commutative parabolic type corresponding to the  diagram \hbox{\unitlength=0.5pt
\begin{picture}(240,30)(-8,5)
\put(10,10){\circle*{10}}
\put(15,10){\line (1,0){30}}
\put(50,10){\circle*{10}}
\put(55,10){\line (1,0){30}}
\put(90,10){\circle*{10}}
\put(95,10){\circle*{1}}
\put(100,10){\circle*{1}}
\put(105,10){\circle*{1}}
\put(110,10){\circle*{1}}
\put(115,10){\circle*{1}}
\put(120,10){\circle*{1}}
\put(125,10){\circle*{1}}
\put(130,10){\circle*{1}}
\put(135,10){\circle*{1}}
\put(140,10){\circle*{10}}
\put(145,10){\line (1,0){30}}
\put(180,10){\circle*{10}}
\put(184,12){\line (1,0){41}}
\put(184,8){\line(1,0){41}}
\put(200,5.5){$<$}
\put(230,10){\circle*{10}}
\put(230,10){\circle{16}}
 \put(260,6){$C_{n} $.}
\end{picture}}

The unique fundamental relative invariant is the determinant, therefore it is $QD1$. It corresponds   to case $(2)$ in Table 2.

\vskip5pt
\subsubsection{} $\Lambda^2(SL(n))\times \C^* (n\geq4)$.

Up to geometric equivalence this is the representation of $GL(n)$ on $AS(n)$ given by $g.X=gX{^tg}$ ($g\in GL(n), X\in AS(n)$). This $PV$ is of commutative parabolic type corresponding to the  diagram \hbox{\hskip -40pt\unitlength=0.5pt
\begin{picture}(340,45)(-82,5)
\put(10,10){\circle*{10}}
\put(15,10){\line (1,0){30}}
\put(50,10){\circle*{10}}
\put(55,10){\line (1,0){30}}
\put(90,10){\circle*{10}}
\put(95,10){\line (1,0){30}}
\put(130,10){\circle*{10}}
\put(140,10){\circle*{1}}
\put(145,10){\circle*{1}}
\put(150,10){\circle*{1}}
\put(155,10){\circle*{1}}
\put(160,10){\circle*{1}}
\put(170,10){\circle*{10}}
\put(175,10){\line (1,0){30}}
\put(210,10){\circle*{10}}
\put(215,14){\line (1,1){20}}
\put(240,36){\circle*{10}}
\put(240,36){\circle{16}}
\put(215,6){\line (1,-1){20}}
\put(240,-16){\circle*{10}}
\put(260,6){$D_{n} $.}
 \end{picture}
\hskip 30pt} 
\vskip 15pt
It is well known that there is no relative invariant if $n$ is odd, and that for $n$ even the  unique fundamental relative invariant is the pfaffian. Therefore it is $QD1$ if and only if $n=2p$.  It corresponds  to case $(3)$ in Table 2.

\vskip5pt
\subsubsection{} $SL(n)\otimes SL(m)^* \times \C^* (n,m\geq2)$.

By Remark \ref{rem-geom-equ}, this representation is geometrically equivalent to case  $SL(n)\otimes SL(m) \times \C^* (n,m\geq2)$ which is considered in Table 1 of Benson-Ratcliff (\cite {Benson-Ratcliff-article}). This is the representation of $SL(n)\times SL(m)$ on the space $M_{n,m}$   given by $(g_{1},g_{2}).X=g_{1}Xg_{2}^{-1}$, $g_{1}\in SL(n), g_{2}\in SL(m), X\in M_{n,m}$. This is a commutative parabolic $PV$ corresponding to the diagram \hbox{\hskip 65pt \unitlength=0.5pt
\hskip-100pt \begin{picture}(400,30)(0,5)
\put(90,10){\circle*{10}}
\put(85,-10){$\alpha_1$}
\put(95,10){\line (1,0){30}}
\put(130,10){\circle*{10}}
 
\put(140,10){\circle*{1}}
\put(145,10){\circle*{1}}
\put(150,10){\circle*{1}}
\put(155,10){\circle*{1}}
\put(160,10){\circle*{1}}
\put(165,10){\circle*{1}}
\put(170,10){\circle*{1}}
\put(175,10){\circle*{1}}
\put(180,10){\circle*{1}}
 \put(195,10){\circle*{10}}
 
\put(195,10){\line (1,0){30}}
\put(230,10){\circle*{10}}
\put(220,-10){$\alpha_{n}$}
\put(230,10){\circle{18}}
\put(235,10){\line (1,0){30}}
\put(270,10){\circle*{10}}
 
\put(280,10){\circle*{1}}
\put(285,10){\circle*{1}}
\put(290,10){\circle*{1}}
\put(295,10){\circle*{1}}
\put(300,10){\circle*{1}}
\put(305,10){\circle*{1}}
\put(310,10){\circle*{1}}
\put(315,10){\circle*{1}}
\put(320,10){\circle*{1}}
\put(330,10){\circle*{10}}
 
\put(335,10){\line (1,0){30}}
\put(370,10){\circle*{10}}
\put(360,-10){$\alpha_{n+m-1}$}
\put(420, 6)  {$A_{n+m-1}$.}
\end{picture} \hskip 50pt
} 
\vskip 15pt

 If $n\neq m$, then there is no fundamental relative invariant. If $n=m$, then the unique fundamental relative invariant is the determinant. Hence it is $QD1$ if and only if $n=m$. It corresponds to case $(4)$ in Table 2.
\vskip5pt
\subsubsection{} $SL(2)\otimes Sp(n) \times \C^* (n\geq2)$. 

Up to geometric equivalence we can consider that this is the representation of $G=GL(2)\times Sp(n)$ acting on $V= M_{2n,2} $ by 
$$(g_{1},g_{2}).X=g_{2}X{^tg_{1}},\,g_{1}\in GL(2), g_{2}\in Sp(n),X\in M_{2n,2}$$This is a regular irreducible $PV$ of parabolic type (not commutative), corresponding to the diagram

\vskip 5pt
\raisebox{8pt}{
\hbox{\unitlength=0.5pt
\hskip 100pt\begin{picture}(300,30)
\put(50,10){\circle*{10}}
\put(55,10){\line (1,0){30}}
\put(90,10){\circle*{10}}
\put(90,10){\circle{16}}
\put(95,10){\line (1,0){30}}
\put(130,10){\circle*{10}}
\put(130,10){\circle*{1}}
\put(135,10){\circle*{1}}
\put(140,10){\circle*{1}}
\put(145,10){\circle*{1}}
\put(150,10){\circle*{1}}
\put(155,10){\circle*{1}}
\put(160,10){\circle*{1}}
\put(165,10){\circle*{1}}
\put(170,10){\circle*{1}}
\put(175,10){\circle*{10}}
\put(180,10){\line (1,0){30}}
\put(215,10){\circle*{10}}
\put(219,12){\line (1,0){41}}
\put(219,8){\line(1,0){41}}
\put(235,5.5){$<$}
\put(265,10){\circle*{10}}
\end{picture} \,\,\,\raisebox{3pt} {$C_{n+2}$}
}}
\vskip 5pt
(see \cite{Sato-Kimura}, \cite{rub-kyoto}, \cite {Rubenthaler-bouquin-PV}). Hence it is $QD1$. According to the computations in \cite{Sato-Kimura} (Proposition $17$ p.100-101), the fundamental relative invariant is $f(X)= Pf({^tX}JX)$, where $X\in M_{2n,2}(\C)$, where $J=\begin{pmatrix}0&Id_{n}\\
-Id_{n}&0
\end{pmatrix}$, and where $Pf(.)$ is the pfaffian of a $2\times 2$  skew symmetric matrix. It is case $(6)$ in Table $2$.

\vskip5pt
\subsubsection{} $SL(3)\otimes Sp(n) \times \C^* (n\geq2)$. 

This is a non regular irreducible $PV$ of parabolic type corresponding to the diagram

\hskip 100pt \hbox{\unitlength=0.5pt
\begin{picture}(240,10)
\put(10,7){\circle*{10}}
\put(15,7){\line (1,0){30}}
\put(50,7){\circle*{10}}
\put(55,7){\line (1,0){30}}
\put(90,7){\circle*{10}}
\put(90,7){\circle{16}}
\put(95,7){\circle*{1}}
\put(100,7){\circle*{1}}
\put(105,7){\circle*{1}}
\put(110,7){\circle*{1}}
\put(115,7){\circle*{1}}
\put(120,7){\circle*{1}}
\put(125,7){\circle*{1}}
\put(130,7){\circle*{1}}
\put(135,7){\circle*{1}}
\put(140,7){\circle*{10}}
\put(145,7){\line (1,0){30}}
\put(180,7){\circle*{10}}
\put(184,10){\line (1,0){41}}
\put(184,4){\line(1,0){41}}
\put(200,2.5){$<$}
\put(230,7){\circle*{10}}
 \end{picture}} 
 
 of type $C_{n+3}$, it is known (\cite{rub-kyoto} or \cite{Rubenthaler-bouquin-PV}) that it has no non-trivial relative invariant, hence it is not $QD1$.

\vskip5pt
\subsubsection{} $SL(n)\otimes Sp(2) \times \C^* (n\geq4)$.   

This  again is an irreducible $PV$ of parabolic type corresponding to the diagram

\hskip 100pt \hbox{\unitlength=0.5pt
\begin{picture}(240,10)
\put(10,7){\circle*{10}}
\put(15,7){\line (1,0){30}}
\put(50,7){\circle*{10}}
\put(55,7){\line (1,0){30}}
\put(90,7){\circle*{10}}
\put(95,7){\circle*{1}}
\put(100,7){\circle*{1}}
\put(105,7){\circle*{1}}
\put(110,7){\circle*{1}}
\put(115,7){\circle*{1}}
\put(120,7){\circle*{1}}
\put(125,7){\circle*{1}}
\put(130,7){\circle*{1}}
\put(135,7){\circle*{1}}
\put(140,7){\circle*{10}}
\put(140,7){\circle{16}}
\put(145,7){\line (1,0){30}}
\put(180,7){\circle*{10}}
\put(184,10){\line (1,0){41}}
\put(184,4){\line(1,0){41}}
\put(200,2.5){$<$}
\put(230,7){\circle*{10}}
 
\end{picture}}, 

of type $C_{n+2}$. It is known (\cite{rub-kyoto} or \cite{Rubenthaler-bouquin-PV}) that this $PV$ has a non-trivial relative invariant if and only if $n=4$. Hence this space is $QD1$ if and only if $n=4$, and then it is regular. In this case the group $SL(4)\times Sp(2)$ acts on $M_{4}$ by $(g_{1},g_{2}).X=g_{1}Xg_{2}^{-1}$, and  the fundamental relative invariant is the determinant. It is case $(7)$ in Table 2.

\vskip5pt
\subsubsection{} $Spin(7)\times \C^*$. 

This space is known (\cite{rub-kyoto} or \cite{Rubenthaler-bouquin-PV}) to be an irreducible regular $PV$ of parabolic type inside $F_{4}$ corresponding to the diagram \raisebox{-2pt}{{
\hbox{\unitlength=0.5pt
\begin{picture}(145,30)
\put(10,10){\circle*{10}}
\put(15,10){\line (1,0){30}}
\put(50,10){\circle*{10}}
\put(55,12){\line (1,0){39}}
\put(65,5.5){$>$}
\put(55,8){\line(1,0){39}}
\put(90,10){\circle*{10}}
\put(95,10){\line (1,0){40}}
\put(140,10){\circle*{10}}
\put(140,10){\circle{16}}
\end{picture} 
}}}. Here the space has dimension $8$ and the action is obtained by embedding $Spin(7)$ into $SO(8)$. The fundamental relative invariant is the nondegenerate quadratic form which defines $SO(8)$. It is case $(8)$ in Table 2.

\vskip5pt
\subsubsection{} $Spin(9)\times \C^*$. 

According to \cite{Sato-Kimura}, p. 146, number (19) of Table, this is an irreducible regular $PV$ whose fundamental relative invariant is a quadratic form, hence it is $QD1$. According to the diagramatical rules in Remark \ref{rem-diagram} d) it is not of parabolic type. But it has nevertheless an interesting connection with $PV$'s of parabolic type, see \cite{Rubenthaler-J-Alg}, Theorem 5.1. p. 377. It is case $(9)$ in Table 2.

\vskip5pt
\subsubsection{} $Spin(10)\times \C^*$. 

This is a $PV$ of parabolic type inside $E_{6}$ (corresponding to the diagram {\hbox{\unitlength=0.5pt
\begin{picture}(200,20)(-10,-7)
\put(10,0){\circle*{10}}
\put(15,0){\line  (1,0){30}}
\put(50,0){\circle*{10}}
\put(55,0){\line  (1,0){30}}
\put(90,0){\circle*{10}}
\put(90,-5){\line  (0,-1){30}}
\put(90,-40){\circle*{10}}
\put(95,0){\line  (1,0){30}}
\put(130,0){\circle*{10}}
\put(135,0){\line  (1,0){30}}
\put(170,0){\circle*{10}}
\put(170,0){\circle{16}}
\end{picture}}).  
\vskip 15pt
According to the table in    (\cite{rub-kyoto} or \cite{Rubenthaler-bouquin-PV})  it has no non-trivial relative 
 invariant, hence it is not $QD1$.

\vskip5pt
\subsubsection{} $G_{2}\times \C^*$.

 According to \cite{Sato-Kimura}, p. 146, number (25) of Table, this is an irreducible regular $PV$ whose fundamental relative invariant is a quadratic form, hence it is $QD1$. According to the diagramatical rules in Remark \ref{rem-diagram} d) it is not of parabolic type. But it has nevertheless an interesting connection with $PV$'s of parabolic type, see \cite{Rubenthaler-J-Alg}, Theorem 6.1. p. 381. It is case $(10)$ in Table 2.

\vskip5pt
\subsubsection{} $E_{6}\times \C^*$. 

This space is known (\cite{rub-kyoto} or \cite{Rubenthaler-bouquin-PV}) to be an irreducible regular $PV$ of parabolic type inside $E_{7}$ corresponding to the diagram \hbox{\hskip-40pt\unitlength=0.5pt
 \begin{picture}(300,35)(-82,-10)
\put(10,0){\circle*{10}}
\put(15,0){\line  (1,0){30}}
\put(50,0){\circle*{10}}
\put(55,0){\line  (1,0){30}}
\put(90,0){\circle*{10}}
\put(90,-5){\line  (0,-1){30}}
\put(90,-40){\circle*{10}}
\put(95,0){\line  (1,0){30}}
\put(130,0){\circle*{10}}
\put(135,0){\line  (1,0){30}}
\put(170,0){\circle*{10}}
\put(175,0){\line (1,0){30}}
\put(210,0){\circle*{10}}
\put(210,0){\circle{18}}
\end{picture}
}.  
 \vskip 15pt
   It is the 
 $27$-dimensional representation of $E_{6}$. Its fundamental relative invariant has degree $3$, it is  known as the Freudenthal cubic. This is the case $(5)$ in Table 2.

\subsection{Non-irreducible MF spaces}\hfill

Here  we examine the cases arising in Table II of \cite{Benson-Ratcliff-article}. We keep the same notations  for the representations  as before. In addition we adopt also the following notation from \cite{Benson-Ratcliff-article}. If $(G_{1},V_{1})$ and $(G_{2},V_{2})$ are representations of two semi-simple groups $G_{1}$ and $G_{2}$ which share a common simple factor $H$, then the notation $G_{1}\oplus_{H}G_{2}$ denotes the image of the representation on $V_{1}\oplus V_{2}$ where the common factor $H$ acts diagonally. For example $SL(n)\oplus_{SL(n)}SL(n)$ denotes the direct sum representation ($SL(n), \C^n\oplus \C^n)$, and $Spin(8)\oplus _{Spin(8)}SO(8)$ denotes the action of $Spin(8)$ on $\C^8\oplus \C^8$ via the Spin representation on the first $\C^8$  factor and via the natural representation of $SO(8)$ on the second factor.

\vskip5pt
\subsubsection{} $(SL(n)\oplus_{SL(n)}SL(n))\times {\C^*}^2 (n\geq 2)$. 

This space is a parabolic $PV$ corresponding to the diagram 

\centerline{\raisebox{-20pt} {\vbox{{
\hbox{\unitlength=0.5pt
\begin{picture}(240,35)(0,-15)
\put(10,0){\circle*{10}}
\put(15,0){\line (1,0){30}}
\put(50,0){\circle*{10}}
\put(55,0){\line (1,0){30}}
\put(90,0){\circle*{10}}
\put(95,0){\line (1,0){30}}
\put(130,0){\circle*{10}}
\put(140,0){\circle*{1}}
\put(145,0){\circle*{1}}
\put(150,0){\circle*{1}}
\put(155,0){\circle*{1}}
\put(160,0){\circle*{1}}
\put(170,0){\circle*{10}}
\put(175,0){\line (1,0){30}}
\put(210,0){\circle*{10}}
\put(215,4){\line (1,1){20}}
\put(240,26){\circle*{10}}
\put(240,26){\circle{16}}
\put(215,-4){\line (1,-1){20}}
\put(240,-26){\circle*{10}}
\put(240,-26){\circle{16}}
\end{picture}\hskip 20pt\raisebox {5pt}{$D_{n+1}.$} }
}}}}
\vskip 20pt
Lets us show that this space is $QD1$ if and only if $n=2$. The representation is given by 
$(g,\lambda,\mu).X= gX\begin{pmatrix}\lambda&0\\
0&\mu
\end{pmatrix}$, where $g\in SL(n), X\in M_{n,2},\lambda,\mu\in \C^*$. Set $X_{0}=\left[\begin{array}{cc}1&0\\
0&1\\
0&0\\
\vdots&\vdots\\
0&0\\
\end{array}\right]$. A simple computation shows that the isotropy subgroup $G_{X_{0}}$ of $X_{0}$ is the subgroup of $SL(n)\times {C^*}^2$ consisting of  triplets of the form  
($\left[
\begin{array}{c|c}
{\begin{matrix}\lambda^{-1}& 0\\
0&\mu^{-1}\\
\end{matrix}} & \beta \\ \hline
0 &\delta
\end{array}\right]
 ,\lambda,\mu$). Then, an easy calculation shows that $\dim G- \dim G_{X_{0}}=2n$, and hence the point $X_{0}$ is generic. Moreover the preceding computation of $G_{X_{0}}$ shows that, if $n>2$,  the subgroup generated by the derived group $SL(n)\times \{1\}\times\{1\}$ and $G_{X_{0}}$ is the full group $SL(n)\times {\C^*}^2$. According to Remark \ref{rem-caracteres} this proves that if $n\neq2$ there exists no non-trivial relative invariant and hence the space is not $QD1$. If $n=2$, the space is the space $M_{2}$  and then the determinant is the only fundamental relative invariant. This is a particular case of $(1)$ in Table 3.

\vskip5pt
\subsubsection{} $(SL(n)^*\oplus_{SL(n)}SL(n))\times {\C^*}^2 (n\geq 2)$.   

   This is the representation of $SL(n)\times  {\C^*}^2$ on $M_{1,2}\times M_{2,1}$ given by $(g,\lambda,\mu)(u,v)= (\lambda ug^{-1},\mu gv)$ where $\lambda,\mu\in \C^*$, $g\in SL(n)$, $u\in M_{1,2}$ and $v\in M_{2,1}$. This a parabolic $PV$ corresponding to the diagram
\raisebox{-3pt}{{
\hbox{\unitlength=0.5pt
\begin{picture}(250,30)
\put(10,10){\circle*{10}}
\put(10,10){\circle{16}}
\put(15,10){\line (1,0){30}}
\put(50,10){\circle*{10}}
\put(60,10){\circle*{1}}
\put(65,10){\circle*{1}}
\put(70,10){\circle*{1}}
\put(75,10){\circle*{1}}
\put(80,10){\circle*{1}}
\put(90,10){\circle*{10}}
\put(95,10){\line (1,0){30}}
\put(130,10){\circle*{10}}
 \put(135,10){\line (1,0){30}}
\put(170,10){\circle*{10}}
\put(180,10){\circle*{1}}
\put(185,10){\circle*{1}}
\put(190,10){\circle*{1}}
\put(195,10){\circle*{1}}
\put(200,10){\circle*{1}}
\put(210,10){\circle*{10}}
\put(215,10){\line (1,0){30}}
\put(250,10){\circle*{10}}
\put(250,10){\circle{16}}
\end{picture}
}}\hskip 6pt \raisebox{3pt}{$A_{n+1}$}}
It is easily seen   that the quadratic form $f(u,v)= uv$ is the unique fundamental relative invariant and as the generic isotropy subgroup is reductive, this $PV$ is regular. This is a particular case of a family of  so-called $Q$-irreducible $PV$'s of parabolic type. We refer the reader interested into details to Lemma 4.8 in \cite{Rubenthaler-decomposition}.  It is case $(1)$ in Table 3.

\vskip5pt
\subsubsection{} $(SL(n)\oplus_{SL(n)}\Lambda^2(SL(n)) \times {\C^*}^2 (n\geq 4)$.  

The representation is given by $(g,\lambda,\mu).(u,x)=( \lambda g u, \mu g x {^t\hskip -2pt g}$) where $\lambda,\mu\in \C^*$, $g\in SL(n)$, $u\in \C^n, x\in AS(n)$. This $PV$ is not of parabolic type except for the cases where $n=5,6,7$ which correspond respectively to the following diagrams:
\vskip 5pt

 \hbox{\unitlength=0.5pt
\begin{picture}(190,40)(-10,-45)
\put(10,0){\circle*{10}}
\put(10,0){\circle{16}}
\put(15,0){\line  (1,0){30}}
\put(50,0){\circle*{10}}
\put(55,0){\line  (1,0){30}}
\put(90,0){\circle*{10}}
\put(90,-5){\line  (0,-1){30}}
\put(90,-40){\circle*{10}}
\put(90,-40){\circle{16}}
\put(95,0){\line  (1,0){30}}
\put(130,0){\circle*{10}}
\put(135,0){\line  (1,0){30}}
\put(170,0){\circle*{10}}
\end{picture} \hskip 0pt\raisebox {20pt}{$E_{6}$;}
\vbox{\hbox{\unitlength=0.5pt
\begin{picture}(230,40)(-10,-45)
\put(10,0){\circle*{10}}
\put(10,0){\circle{16}}
\put(15,0){\line  (1,0){30}}
\put(50,0){\circle*{10}}
\put(55,0){\line  (1,0){30}}
\put(90,0){\circle*{10}}
\put(90,-5){\line  (0,-1){30}}
\put(90,-40){\circle*{10}}
\put(90,-40){\circle{16}}
\put(95,0){\line  (1,0){30}}
\put(130,0){\circle*{10}}
\put(135,0){\line  (1,0){30}}
\put(170,0){\circle*{10}}
\put(175,0){\line (1,0){30}}
\put(210,0){\circle*{10}}
\end{picture} \hskip 0pt\raisebox {20pt}{$E_{7}$;}
}}\vbox{\hbox{\unitlength=0.5pt
\begin{picture}(270,40)(-10,-45)
\put(10,0){\circle*{10}}
\put(10,0){\circle{16}}
\put(15,0){\line  (1,0){30}}
\put(50,0){\circle*{10}}
\put(55,0){\line  (1,0){30}}
\put(90,0){\circle*{10}}
\put(90,-5){\line  (0,-1){30}}
\put(90,-40){\circle*{10}}
\put(90,-40){\circle{16}}
\put(95,0){\line  (1,0){30}}
\put(130,0){\circle*{10}}
\put(135,0){\line  (1,0){30}}
\put(170,0){\circle*{10}}
\put(175,0){\line (1,0){30}}
\put(210,0){\circle*{10}}
\put(215,0){\line (1,0){30}}
\put(250,0){\circle*{10}}
\end{picture} \hskip 0pt\raisebox {20pt}{$E_{8}$.}
}}
}  
\vskip 8pt
 
$\bullet$ Suppose first that $n=2p$ is even.    It is well known  that the restriction of the representation to $AS(n)$   is  a  regular $PV$ of parabolic type, and that its unique fundamental relative invariant is the pfaffian. Moreover the generic isotropy subgroup of this component at the point $J= \left[
\begin{array}{c|c}
{ 0} &Id_{p}\\ \hline
-Id_{p} &0
\end{array}\right]$ is the  subgroup $ Sp(p)\times \C^*$.  The restriction of the "natural" representation of $SL(2p)\times \C^*$ on $\C^{2p}$ to $Sp(p)\times \C^*$ is well known to have no non-trivial relative invariant. Hence, by Proposition \ref{prop-isotropie-rang1} this space is $QD1$. As the fundamental relative invariant does not depend on all variables, it is not regular. It is case $(2)(a)$ in Table 3.
\vskip 5pt
  $\bullet$ Suppose now that $n=2p+1$ is odd. Rather than the group $SL(n)\times {\C^*}^2$, we will here consider the Lie algebra ${\go g}={\go{ gl}}(n)\times \C$ acting on $V= {\C^n}\oplus AS(n)$ by $(U,\lambda)(v,x)=(\lambda v+Uv, Ux+x{^tU)} $ where $\lambda\in \C,U\in {\go{ gl}}(n), v\in {\C^n}, x\in AS(n) $.  Once we identify ${\go{ gl}}(n)$ with ${\go{ sl}}(n)\times \C$ this is essentially the derived representation of $(SL(n)\oplus_{SL(n)}\Lambda^2(SL(n)) \times {\C^*}^2$. Consider the point $X_{0}=(v_{0},x_{0})\in {\C^n}\oplus AS(n)$ where $v_{0}=\left[
\begin{array}{c}
0\\
\vdots\\
0\\
1
\end{array}\right]$ and where $x_{0}= \left[
\begin{array}{c|c}
{ J} &0\\ \hline
0 &0
\end{array}\right]$. An easy computation shows that the isotropy subalgebra ${\go g}_{X_{0}}$ is given by 
$${\go g}_{X_{0}}=\{( \left[
\begin{array}{c|c}
A &0\\ \hline
0 &-\lambda
\end{array}\right],\lambda ), \lambda\in \C, A\in Sp(p)\}.$$
As $\dim {\go g}-\dim {\go g}_{X_{0}}=\dim V$,  the point $X_{0}$ is generic. The Lie algebra ${\go g}_{X_{0}}$ is the Lie algebra of a reductive subgroup. Hence this space is regular. As $[{\go g},{\go g}]= {\go{sl}}(n)$, the Lie algebra generated by ${\go g}_{X_{0}}$ and $[{\go g},{\go g}]$ is equal to $\go{sl}(n)$, and hence ${\go g}/{\go{sl}}(n)\simeq \C$. According   to  Remark \ref{rem-caracteres}, there exists exactly one (up to constants)  fundamental relative invariant, and hence this space is QD1. Keeping the same notations as above it is easy to see that the polynomial 

$$f(v,x)= Pf(\left[
\begin{array}{c|c}
x &v\\ \hline
-^tv &0
\end{array}\right]), v\in {\C^n}, x\in AS(2p+1)$$
is a fundamental relative invariant. These spaces are $Q$-irreducible in the sense of \cite{Rubenthaler-decomposition} (see Remark 4.15 in \cite{Rubenthaler-decomposition}).
It is case $(2)(b)$ in Table 3.

\vskip5pt
\subsubsection{} $(SL(n)^*\oplus_{SL(n)}\Lambda^2(SL(n)) \times {\C^*}^2 (n\geq 4)$. 

This $PV$ is always of parabolic type. The corresponding diagram is the following:

\centerline{\raisebox{-20pt} {\vbox{{
\hbox{\unitlength=0.5pt
\begin{picture}(240,35)(0,-15)
\put(10,0){\circle*{10}}
\put(10,0){\circle{16}}
\put(15,0){\line (1,0){30}}
\put(50,0){\circle*{10}}
\put(55,0){\line (1,0){30}}
\put(90,0){\circle*{10}}
\put(95,0){\line (1,0){30}}
\put(130,0){\circle*{10}}
\put(140,0){\circle*{1}}
\put(145,0){\circle*{1}}
\put(150,0){\circle*{1}}
\put(155,0){\circle*{1}}
\put(160,0){\circle*{1}}
\put(170,0){\circle*{10}}
\put(175,0){\line (1,0){30}}
\put(210,0){\circle*{10}}
\put(215,4){\line (1,1){20}}
\put(240,26){\circle*{10}}
\put(240,26){\circle{16}}
\put(215,-4){\line (1,-1){20}}
\put(240,-26){\circle*{10}}
\end{picture}\hskip 20pt\raisebox {5pt}{$D_{n+1}$} }
}}}}
\vskip 20pt
Up to geometric equivalence we can take here $G=GL(n)\times {\C^*}$ acting on    $V= M_{1,n}\oplus AS(n)$  by $(g,\lambda).(v,x)=(\lambda vU^{-1},Ux\,{^tU})$. 

\vskip 5pt
$\bullet $  Suppose first that $n=2p$ is even. The restriction of the representation to $AS(n)$ is a regular $PV$ whose fundamental relative invariant is the pfaffian. The partial isotropy of $J= \left[
\begin{array}{c|c}
{ 0} &Id_{p}\\ \hline
-Id_{p} &0
\end{array}\right]\in AS(n)$ is equal to $Sp(p)\times \C^*$, and it is well known that the action of $Sp(p)\times \C^*$ on $M_{1,n}$ has non non-trivial relative invariant. Therefore, from Proposition \ref{prop-isotropie-rang1}, we obtain that this $MF$ space is $QD1$. As the fundamental relative invariant does not depend on all variables, it is not regular. It is case $(3)$ in Table 3.

\vskip 5pt

$\bullet$ Suppose now that $n=2p+1$ is odd.   Rather than the group action, we will  consider here the infinitesimal action. In other words we consider the Lie algebra ${\go g}={\go{ gl}}(n)\times \C$ acting on $V= {M_{1,2p+1}}\oplus AS(2p+1)$ by $(U,\lambda)(v,x)=(\lambda v-vU, Ux+x{^tU)} $ where $\lambda\in \C,U\in {\go{ gl}}(n), v\in {\C^n}, x\in AS(n) $. Consider the element $X_{0}=(v_{0},x_{0})\in V$ which is defined by $v_{0}=(1,0,\dots,0)$ and $x_{0}=  \left[
\begin{array}{c|c}
{ J} &0\\ \hline
0 &0
\end{array}\right]\in AS(2p+1)$ with $J= \left[
\begin{array}{c|c}
{ 0} &Id_{p}\\ \hline
-Id_{p} &0
\end{array}\right]$. A computation shows that the isotropy subalgebra ${\go g}_{X_{0}}$ is the set of couples of the form 
$(\left[
\begin{array}{c|c}
A &B\\ \hline
0 &D
\end{array}\right], \lambda) $,

-- where $A=\left[
\begin{array}{c|c}
\alpha&\beta\\ \hline
 \gamma&-^t\alpha
\end{array}\right]$ with  $\alpha=\left[\begin{array}{c}
\lambda, 0\dots0\\
A_{1}\end{array}\right]$, $A_{1}\in M_{p-1,p}$; $\beta=\left[\begin{array}{cc}0&0\\
0& b\end{array}\right]$, $b\in Sym(p-1)$; $\gamma\in Sym(p)$.

-- where $B=\left[\begin{array}{c}0\\
\tilde B\end{array}\right]$, $\tilde B\in \C^{2p-1}$

-- and where $D,\lambda \in \C$.

Then one verifies  that $\dim {\go g}-\dim {\go g}_{X_{0}}=\dim V$, and hence $X_{0}$ is generic. Moreover the Lie subalgebra generated by ${\go g}_{X_{0}}$ and $[{\go g} ,{\go g}]=\go{sl}(n)\times \{0\}$ is equal to ${\go g}$. According again to Remark \ref{rem-caracteres}, this shows that there is no non-trivial relative invariant, and therefore this space is not $QD1$.

\vskip 5pt

\vskip5pt
\subsubsection{} $(SL(n)\oplus_{SL(n)} (SL(n)\otimes SL(m)) \times {\C^*}^2 (n,m\geq 2)$. 

It is convenient here to replace this representation by the representation $(G,V)$ where $G=GL(n)\times GL(m)$ acts on $V=M_{n,1}\oplus M_{n,m}$ by
$$(g_{1},g_{2})(v,x)=(g_{1}v,g_{1}xg_{2}^{-1}), g_{1}\in GL(n),g_{2}\in GL(m),v\in M_{n,1},x\in M_{n,m}.$$
Due to   Remark \ref{rem-geom-equ}, this representation is geometrically equivalent to the first one. It is not of parabolic type except for the following cases:

\vskip 8pt

  {\raisebox{5pt} {$n=3,m\in \N:$}{\vbox{{
\hbox{\unitlength=0.5pt
\begin{picture}(240,35)(0,-15)
\put(10,0){\circle*{10}}
\put(15,0){\line (1,0){30}}
\put(50,0){\circle*{10}}
\put(55,0){\line (1,0){30}}
\put(90,0){\circle*{10}}
\put(100,0){\circle*{1}}
\put(105,0){\circle*{1}}
\put(110,0){\circle*{1}}
\put(115,0){\circle*{1}}
\put(115,0){\circle*{1}}
\put(115,0){\circle*{1}}
\put(120,0){\circle*{1}}
\put(130,0){\circle*{10}}
\put(135,0){\line (1,0){30}}
\put(170,0){\circle*{10}}
\put(170,0){\circle{16}}
\put(175,0){\line (1,0){30}}
\put(210,0){\circle*{10}}
\put(215,4){\line (1,1){20}}
\put(240,26){\circle*{10}}
\put(240,26){\circle{16}}
\put(215,-4){\line (1,-1){20}}
\put(240,-26){\circle*{10}}
\end{picture}\hskip 20pt\raisebox {5pt}{$D_{m+3}$} }
}}}}

\vskip 20pt

\raisebox {20pt}{$n=4,m=2:$} \hbox{ \unitlength=0.5pt
\begin{picture}(190,40)(-10,-45)
\put(10,0){\circle*{10}}
\put(15,0){\line  (1,0){30}}
\put(50,0){\circle*{10}}
\put(55,0){\line  (1,0){30}}
\put(90,0){\circle*{10}}
\put(90,-5){\line  (0,-1){30}}
\put(90,-40){\circle*{10}}
\put(90,-40){\circle{16}}
\put(95,0){\line  (1,0){30}}
\put(130,0){\circle*{10}}
\put(130,0){\circle{16}}
\put(135,0){\line  (1,0){30}}
\put(170,0){\circle*{10}}
\end{picture}  \hskip 0pt\raisebox {20pt}{$E_{6}$}
}}

\vskip 5pt
 
\raisebox {20pt}{$n=4,m=3:$}{\hbox{\unitlength=0.5pt
\begin{picture}(230,40)(-10,-45)
\put(10,0){\circle*{10}}
\put(15,0){\line  (1,0){30}}
\put(50,0){\circle*{10}}
\put(55,0){\line  (1,0){30}}
\put(90,0){\circle*{10}}
\put(90,-5){\line  (0,-1){30}}
\put(90,-40){\circle*{10}}
\put(90,-40){\circle{16}}
\put(95,0){\line  (1,0){30}}
\put(130,0){\circle*{10}}
\put(130,0){\circle{16}}
\put(135,0){\line  (1,0){30}}
\put(170,0){\circle*{10}}
\put(175,0){\line (1,0){30}}
\put(210,0){\circle*{10}}
\end{picture}\hskip 5pt\raisebox {20pt}{$E_{7};$} \hskip 30pt

\vbox{\raisebox {20pt}{$n=5,m=2:$}{\hbox{\unitlength=0.5pt
\begin{picture}(230,40)(-10,-45)
\put(10,0){\circle*{10}}
\put(15,0){\line  (1,0){30}}
\put(50,0){\circle*{10}}
\put(50,0){\circle{16}}
\put(55,0){\line  (1,0){30}}
\put(90,0){\circle*{10}}
\put(90,-5){\line  (0,-1){30}}
\put(90,-40){\circle*{10}}
\put(90,-40){\circle{16}}
\put(95,0){\line  (1,0){30}}
\put(130,0){\circle*{10}}
\put(135,0){\line  (1,0){30}}
\put(170,0){\circle*{10}}
\put(175,0){\line (1,0){30}}
\put(210,0){\circle*{10}}
\end{picture} \hskip 0pt\raisebox {20pt}{$E_{7}$}
}}}}

\vskip 10pt

\raisebox {20pt}{$n=4,m=4:$}\kern-5pt\hbox{\unitlength=0.5pt
\begin{picture}(270,40)(-10,-45)
\put(10,0){\circle*{10}}
\put(15,0){\line  (1,0){30}}
\put(50,0){\circle*{10}}
 \put(55,0){\line  (1,0){30}}
\put(90,0){\circle*{10}}
\put(90,-5){\line  (0,-1){30}}
\put(90,-40){\circle*{10}}
\put(90,-40){\circle{16}}
\put(95,0){\line  (1,0){30}}
\put(130,0){\circle*{10}}
\put(130,0){\circle{16}}
\put(135,0){\line  (1,0){30}}
\put(170,0){\circle*{10}}
\put(175,0){\line (1,0){30}}
\put(210,0){\circle*{10}}
\put(215,0){\line (1,0){30}}
\put(250,0){\circle*{10}}
\end{picture}  \raisebox {20pt}{$E_{8};$

\hskip 5pt
\vbox{{$n=6,m=2:$}\kern-5pt\raisebox{-20pt}{\unitlength=0.5pt
\begin{picture}(270,40)(-10,-45)
\put(10,0){\circle*{10}}
\put(15,0){\line  (1,0){30}}
\put(50,0){\circle*{10}}
\put(50,0){\circle{16}}
\put(55,0){\line  (1,0){30}}
\put(90,0){\circle*{10}}
\put(90,-5){\line  (0,-1){30}}
\put(90,-40){\circle*{10}}
\put(90,-40){\circle{16}}
\put(95,0){\line  (1,0){30}}
\put(130,0){\circle*{10}}
\put(135,0){\line  (1,0){30}}
\put(170,0){\circle*{10}}
\put(175,0){\line (1,0){30}}
\put(210,0){\circle*{10}}
\put(215,0){\line (1,0){30}}
\put(250,0){\circle*{10}}
\end{picture} \hskip 0pt\raisebox {20pt}{$E_{8}$}
}}}}
 
  \vskip 5pt

\vskip 5pt

$\bullet$  Suppose $n=m$.  Then  the component $M_{n,n}=M_{n}$ has a unique fundamental relative invariant, namely the determinant. The point $X_{0}=(e_{1},Id_{n})$, where $e_{1}$ is the first vector of the canonical basis of $M_{n,1}\simeq \C^{n}$, is generic. And the partial isotropy subgroup $G_{(0,Id_{n})}$ is the diagonal subgroup $\{(g,g)\in GL(n)\times GL(n)
\}$. Therefore the action of $G_{(0,Id_{n})}$ on $M_{n,1}$ has no relative invariant. According to Proposition \ref{prop-isotropie-rang1}  this space is $QD1$ in the case $m=n$. As the fundamental relative invariant does not depend on all variables, it is not regular. It is case $(4)(a)$ in Table 3.
\vskip 5pt 
$\bullet$ Supose  that $n<m$. A simple calculation shows that the point $X_{0}=(e_{1},x_{0})$   where $x_{0}=\left[\begin{array}{c|c}Id_{n}&0\end{array}\right]$ is generic and that its isotropy subgroup $G_{X_{0}}$ is the set of pairs of matrices of the form $(A, \left[\begin{array}{c|c}A&0\\ \hline
B&C\end{array}\right])$, where $B\in M_{m-n,n}, C\in GL(m-n)$, and where $A=\left[\begin{array}{c|c}1&A_{1}\\ \hline 0&A_{2}\end{array}\right]$, with $A_{1}\in M_{1,n-1}$ and $A_{2}\in GL(n-1)$.   This implies that the subgroup of $G=GL(n)\times GL(m)$ generated by $G_{X_{0}}$ and  the derived group $SL(n)\times SL(m)$ is $G$ itself. Hence from Remark \ref{rem-caracteres}, we know that there is no non-trivial relative invariant, and therefore it is not $QD1$.

\vskip 5pt

$\bullet$  Suppose  that $n>m+1$. Then the element $X_{0}=(e_{n}, x_{0})$ where $x_{0}= \left[\begin{array}{c}Id_{m}\\0\end{array}\right]$ is generic and the isotropy subgroup $G_{X_{0}}$ is the set of pairs of matrices of the form $(\left[\begin{array}{c|c}A&B\\ \hline
0&C\end{array}\right],A)$ where $A\in GL(m)$, where  $B\in M_{m,n-m}$ is of the form $B=\left[\begin{array}{c|c}B'&0\end{array}\right]$ with $B'\in M_{m,n-m-1}$, and where $C\in GL(n-m)$ is of the form $C=\left[\begin{array}{c|c}C_{1}&0\\ \hline C_{2}&1\end{array}\right]$ with $C_{1}\in GL(n-m-1)$ and $C_{2}\in M_{1,n-m-1}$.    Again this implies that the subgroup generated  by $G_{X_{0}}$ and $[G,G]$ is equal to $G$. Hence by Remark \ref{rem-caracteres}, we obtain that this space has no non-trivial relative invariant, and hence it is not $QD1$.

\vskip 5pt
$\bullet$ Suppose finally that $n=m+1$. Then the same calculation as before holds. But now as $n-m=1$, the isotropy subgroup $G_{X_{0}}$ is the set of pairs of matrices of the form $(\left[\begin{array}{c|c}A&0\\ \hline
0&1\end{array}\right],A)$ where $A\in GL(m)$. The subgroup   $\widetilde G$ generated  by $G_{X_{0}}$ and $[G,G]$ is equal to  $\{(g_{1},g_{2})\in G\,|\, \det(g_{1})=\det(g_{2})\}$. This implies that $\dim G/\widetilde G=1$ and hence by Remark \ref{rem-caracteres}, we obtain that this space has one fundamental relative invariant, and therefore it is $QD1$. As the generic isotropy subgroup is reductive, it is regular. It is easy to see that $f(v,x)=\det(v;x)$, where $(x;v)$ is the $n\times n$ matrix obtained by putting the column vector $v$ left to the $m\times n$ matrix $x$, is the fundamental relative invariant. It is case $(4)(b)$ in Table 3.
\vskip5pt
\subsubsection{} $(SL(n)^*\oplus_{SL(n)} (SL(n)\otimes SL(m)) \times {\C^*}^2 (n\geq 3,m\geq 2)$. 

\vskip 5pt
It is convenient here to consider  the representation $(G,V)$ where $G=GL(n)\times GL(m)$ acts on $V=M_{1,n}\oplus M_{n,m}$   by 
$$(g_{1},g_{2})(v,x)=(vg_{1}^{-1},g_{1}xg_{2}^{-1}), g_{1}\in GL(n),g_{2}\in GL(m),v\in M_{1,n},x\in M_{n,m}.$$
This representation is geometrically equivalent to the original one.
This $PV$ is of parabolic type and corresponds to the diagram:

 \raisebox{8pt} {\hbox{\unitlength=0.5pt
\hskip-100pt \begin{picture}(700,30)(-300,10)
\put(90,10){\circle*{10}}
\put(90,10){\circle{16}}
\put(85,-10){$\alpha_1$}
\put(95,10){\line (1,0){30}}
\put(130,10){\circle*{10}}
 \put(140,10){\circle*{1}}
\put(145,10){\circle*{1}}
\put(150,10){\circle*{1}}
\put(155,10){\circle*{1}}
\put(160,10){\circle*{1}}
\put(165,10){\circle*{1}}
\put(170,10){\circle*{1}}
\put(175,10){\circle*{1}}
\put(180,10){\circle*{1}}
 \put(195,10){\circle*{10}}
 \put(195,10){\line (1,0){30}}
\put(230,10){\circle*{10}}
\put(220,-10){$\alpha_{n+1}$}
\put(230,10){\circle{18}}
\put(235,10){\line (1,0){30}}
\put(270,10){\circle*{10}}
 \put(280,10){\circle*{1}}
\put(285,10){\circle*{1}}
\put(290,10){\circle*{1}}
\put(295,10){\circle*{1}}
\put(300,10){\circle*{1}}
\put(305,10){\circle*{1}}
\put(310,10){\circle*{1}}
\put(315,10){\circle*{1}}
\put(320,10){\circle*{1}}
\put(330,10){\circle*{10}}
\put(335,10){\line (1,0){30}}
\put(370,10){\circle*{10}}
\put(360,-10){$\alpha_{n+m}$}
\end{picture} 
}}  \raisebox{5pt}{$A_{n+m}$}
\vskip 10pt
 The element $(e_{1},x_{0})$ where $e_{1}=(1,0,\dots,0)$ and where $x_{0}= Id_{n}$ if $n=m$, $x_{0}= \left[\begin{array}{c|c}Id_{n}&0\end{array}\right]$ if $n<m$, and $x_{0}=\left[\begin{array}{c}Id_{m}\\ 0\end{array}\right]$ if $n>m$, is generic and almost the same calculations as in $4.2.5$ show that this $MF$ space is $QD1$ if and only if $n=m$. Moreover by the same argument  it is not a regular $PV$. It is case $(5)$ in Table 3.

\vskip5pt
\subsubsection{} $(SL(2)\oplus_{SL(2)} (SL(2)\otimes Sp(n)) \times {\C^*}^2 (n \geq 2)$. 
\vskip 5pt
This $PV$ is of parabolic type and corresponds to the diagram:
\vskip 5pt
\raisebox{8pt}{
\hbox{\unitlength=0.5pt
\hskip 100pt\begin{picture}(300,30)
\put(10,10){\circle*{10}}
\put(10,10){\circle{16}}
\put(15,10){\line (1,0){30}}
\put(50,10){\circle*{10}}
\put(55,10){\line (1,0){30}}
\put(90,10){\circle*{10}}
\put(90,10){\circle{16}}
\put(95,10){\line (1,0){30}}
\put(130,10){\circle*{10}}
\put(130,10){\circle*{1}}
\put(135,10){\circle*{1}}
\put(140,10){\circle*{1}}
\put(145,10){\circle*{1}}
\put(150,10){\circle*{1}}
\put(155,10){\circle*{1}}
\put(160,10){\circle*{1}}
\put(165,10){\circle*{1}}
\put(170,10){\circle*{1}}
\put(175,10){\circle*{10}}
\put(180,10){\line (1,0){30}}
\put(215,10){\circle*{10}}
\put(219,12){\line (1,0){41}}
\put(219,8){\line(1,0){41}}
\put(235,5.5){$<$}
\put(265,10){\circle*{10}}
\end{picture} \,\,\,\raisebox{3pt} {$C_{n+3}$}
}}

It is convenient to  consider here the group $G=\C^*\times GL(2)\times Sp(n)$ which acts on $V= M_{1,2}\oplus  M_{2n,2}$ by 
$$(\lambda,g_{1},g_{2}).(v,x)= (\lambda v{^t\kern-1pt g_{1}}, g_{2}x\,{^tg_{1}}), \text{where } \lambda\in \C^*,g_{1}\in GL(2), g_{2}\in Sp(n), v\in M_{1,2}, x\in M_{2n,2}.$$ This space is geometrically equivalent to the original one. 
The action of $G$ on $V_{2}$ is a  regular parabolic $PV$ corresponding to the subdiagram  \vskip 5pt
\raisebox{8pt}{
\hbox{\unitlength=0.5pt
\hskip 100pt\begin{picture}(300,30)
\put(50,10){\circle*{10}}
\put(55,10){\line (1,0){30}}
\put(90,10){\circle*{10}}
\put(90,10){\circle{16}}
\put(95,10){\line (1,0){30}}
\put(130,10){\circle*{10}}
\put(130,10){\circle*{1}}
\put(135,10){\circle*{1}}
\put(140,10){\circle*{1}}
\put(145,10){\circle*{1}}
\put(150,10){\circle*{1}}
\put(155,10){\circle*{1}}
\put(160,10){\circle*{1}}
\put(165,10){\circle*{1}}
\put(170,10){\circle*{1}}
\put(175,10){\circle*{10}}
\put(180,10){\line (1,0){30}}
\put(215,10){\circle*{10}}
\put(219,12){\line (1,0){41}}
\put(219,8){\line(1,0){41}}
\put(235,5.5){$<$}
\put(265,10){\circle*{10}}
\end{picture} \,\,\,\raisebox{3pt} {$C_{n+2}$}
}}

(see \cite{Sato-Kimura}, \cite{rub-kyoto}, \cite {Rubenthaler-bouquin-PV}). 
As we have already seen  in section 4.1.7. its fundamental relative invariant is the function $x\longmapsto Pf( {^tx}Jx)$ where $J=\left[ \begin{array}{cc}0&Id_{n}\\-Id_{n}&0\end{array}\right]$.
We know from \cite{Sato-Kimura} (p. 100-101)  that the partial isotropy subalgebra of $({\go g},V_{2})$ corresponding to a certain generic element $x_{0}$ in $V_{2}$ is given by 
$${\go g}_{x_{0}}= (\lambda, -\left[\begin{array}{cc}A_{1}&C_{1}\\
B_{1}&-A_{1}\end{array}\right], \left[\begin{array}{c|c}\begin{array}{cc}A_{1}&0\\
0&A_{4}\end{array}&\begin{array}{cc}B_{1}&0\\
0&B_{4}\end{array}\\ \hline
\begin{array}{cc}C_{1}&0\\
0&C_{4}\end{array}& \begin{array}{cc}-A_{1}&0\\
0&-^tA_{4}\end{array}
\end{array}\right]).$$
where $\lambda,  A_{1}, 
B_{1}, C_{1}\in \C, A_{4}\in {\go {gl}}(n-1), B_{4}, C_{4}\in Sym(n-1)$. This shows that ${\go g}_{x_{0}}\simeq  \C\times {\go {sl}}({2})\times  {\go {sp}}(n-1)$. The action of ${\go g}_{x_{0}}$ on $M_{2,1}$ is then essentially the natural action of  ${\go {gl}}(2)$ on $\C^2$, which is known to have no non trivial relative invariant. Therefore, using again Proposition \ref{prop-isotropie-rang1}, we obtain that this space is $QD1$.  Its fundamental relative invariant is given by $f(v,x)= Pf( {^tx}Jx)$. As this function depends only on $x$, the corresponding $PV$ is not regular. It is case $(6)$ in Table 3.

 \vskip5pt
\subsubsection{} $(SL(n)\otimes SL(2))\oplus_{SL(2)}(SL(2)\otimes SL(m)) \times {\C^*}^2,\, (n,m\geq 2)$. 
\vskip 5pt
This space again is a $PV$ of parabolic type corresponding to the diagram
\vskip 5pt

{
\hbox{\unitlength=0.5pt
\hskip 100pt\begin{picture}(250,30)
\put(10,10){\circle*{10}}
\put(5,-10){$\alpha_1$}
 \put(15,10){\line (1,0){30}}
\put(50,10){\circle*{10}}
\put(60,10){\circle*{1}}
\put(65,10){\circle*{1}}
\put(70,10){\circle*{1}}
\put(75,10){\circle*{1}}
\put(80,10){\circle*{1}}
\put(90,10){\circle*{10}}
\put(95,10){\line (1,0){30}}
\put(130,10){\circle*{10}}
\put(130,10){\circle{16}}
\put(125,-10){$\alpha_n$}
 \put(135,10){\line (1,0){30}}
\put(170,10){\circle*{10}}
\put(175,10){\line (1,0){30}}
 \put(210,10){\circle*{10}}
\put(210,10){\circle{16}}
\put(200,-10){$\alpha_{n+2}$}
\put(215,10){\line (1,0){30}}
 \put(250,10){\circle*{10}}
  \put(260,10){\circle*{1}}
 \put(265,10){\circle*{1}}
\put(270,10){\circle*{1}}
\put(275,10){\circle*{1}}
\put(280,10){\circle*{1}}
\put(290,10){\circle*{10}}
\put(295,10){\line (1,0){30}}
\put(330,10){\circle*{10}}
\put(310,-10){$\alpha_{n+m+1}$}
\end{picture} 
\hskip 70pt\raisebox{3pt}{$A_{n+m+1}$}}

 \vskip 10pt
 
Up to geometric equivalence we can take here   $G= GL(n)\times SL(2)\times GL(m)$ acting on $V=V_{1}\oplus V_{2}$  where $V_{1}=M_{n,2}$ and $V_{2}=M_{2,m}$ by
$$(g_{1},g_{2},g_{3})(u,v)=(g_{1}ug_{2}^{-1},g_{2}vg_{3}^{-1}),\,\,  g_{1}\in GL(n),\, g_{2}\in SL(2),\, g_{3}\in GL(m).$$

\vskip 5pt
a) Let us consider first the case where $n=2$ and $m> 2$ (or equivalently $m=2$ and $n>2$). 

In this case the action of $G$ on $V_{1}=M_{2,2}$ has a non trivial relative invariant (the determinant), the (partial) generic isotropy of the matrix $Id_{2}$ is given by  $G_{Id_{2}}=\{(g,g,g_{3}),g\in SL(2), g_{3}\in GL(m)\}$, and the action of $G_{Id_{2}}$ on $V_{2}=M_{2,m}$ is well known to have no non-trivial relative invariant.
We deduce from Proposition \ref{prop-isotropie-rang1} that this $MF$ space is $QD1$. As the fundamental relative invariant which is given by $f(u,v)=\det u$ depends only on the $V_{1}$ variable, it is not regular. It is case $(7)$ in Table 3.
\vskip 5pt

b) Consider now  the case where $n=m=2$. In this case there are obviously two fundamental relative invariants given by $\det u$ and $\det v$ , $u\in V_{1},v\in V_{2}$. Hence this $MF$ space is not $QD1$.

\vskip 5pt

c) Consider finally  the case where $n\geq m>2$ (or equivalently the case where $m\geq n> 2$).

Define $x_{0}=\left[\begin{array}{c}Id_{2}\\ \hline 0\end{array}\right]\in M_{n,2}$ and 
$y_{0}=\left[\begin{array}{c|c}Id_{2}& 0\end{array}\right]\in M_{2,m}$. The pair $(x_{0},y_{0})$ is a generic element and the corresponding isotropy subgroup is given by

$G_{(x_{0},y_{0})}=$

$$\{(\left[\begin{array}{cc} g_{2}&\beta\\
0&\delta\end{array}\right],g_{2},\left[\begin{array}{cc} g_{2}&0\\
c&d\end{array}\right])\in G\,|\,  g_{2}\in SL(2),\delta \in GL(n-2),\beta \in M_{2,n-2}, d\in GL(m-2,)\}.$$
It is then clear that the subgroup generated by the derived group $SL(n)\times SL(2)\times SL(m)$ and $G_{(x_{0},y_{0})}$ is equal to $G=GL(m)\times SL(2)\times GL(m)$.  Remark \ref{rem-caracteres}  implies   that this space has no non-trivial relative invariant, and hence it is not $QD1$.

 \vskip5pt
\subsubsection{} $(SL(n)\otimes SL(2))\oplus_{SL(2)}(SL(2)\otimes Sp(m)) \times {\C^*}^2,\, (n,m\geq 2)$. 
\vskip 5pt
This space also is a $PV$ of parabolic type corresponding to the diagram
\vskip 5pt

{
\hbox{\unitlength=0.5pt
\hskip 100pt\begin{picture}(250,30)
\put(10,10){\circle*{10}}
\put(5,-10){$\alpha_1$}
 \put(15,10){\line (1,0){30}}
\put(50,10){\circle*{10}}
\put(60,10){\circle*{1}}
\put(65,10){\circle*{1}}
\put(70,10){\circle*{1}}
\put(75,10){\circle*{1}}
\put(80,10){\circle*{1}}
\put(90,10){\circle*{10}}
\put(95,10){\line (1,0){30}}
\put(130,10){\circle*{10}}
\put(130,10){\circle{16}}
\put(125,-10){$\alpha_n$}
 \put(135,10){\line (1,0){30}}
\put(170,10){\circle*{10}}
\put(175,10){\line (1,0){30}}
 \put(210,10){\circle*{10}}
\put(210,10){\circle{16}}
\put(200,-10){$\alpha_{n+2}$}
\put(215,10){\line (1,0){30}}
 \put(250,10){\circle*{10}}
  \put(260,10){\circle*{1}}
 \put(265,10){\circle*{1}}
\put(270,10){\circle*{1}}
\put(275,10){\circle*{1}}
\put(280,10){\circle*{1}}
\put(290,10){\circle*{10}}
\put(294,12){\line (1,0){41}}
\put(294,8){\line (1,0){41}}
\put(305,5.5){$<$}
\put(330,10){\circle*{10}}
\put(310,-10){$\alpha_{n+m+2}$}
\end{picture} 
\hskip 70pt\raisebox{3pt}{$C_{n+m+2}$}}

 \vskip 10pt
Up to geometric equivalence we can take here   $G= GL(n)\times GL(2)\times Sp(m)$ acting on $V=V_{1}\oplus V_{2}$  where $V_{1}=M_{n,2}$ and $V_{2}=M_{2m,2}$ by
$$(g_{1},g_{2},g_{3})(u,v)=(g_{1}u{^t\kern -1pt g_{2}},g_{3}v\kern 1pt{^t\kern -1ptg_{2}}),\,\,  g_{1}\in GL(n),\, g_{2}\in SL(2),\, g_{3}\in Sp(m).$$
\vskip 5pt

a) Let us first consider the case where $n>2$. The action of $G$ on $V_{2}$ reduces to the action $GL(2)\times Sp(m)$ on $V_{2}$ which is of parabolic type corresponding to the subdiagram
\vskip 5pt
\raisebox{8pt}{
\hbox{\unitlength=0.5pt
\hskip 100pt\begin{picture}(300,30)
\put(50,10){\circle*{10}}
\put(55,10){\line (1,0){30}}
\put(90,10){\circle*{10}}
\put(90,10){\circle{16}}
\put(95,10){\line (1,0){30}}
\put(130,10){\circle*{10}}
\put(130,10){\circle*{1}}
\put(135,10){\circle*{1}}
\put(140,10){\circle*{1}}
\put(145,10){\circle*{1}}
\put(150,10){\circle*{1}}
\put(155,10){\circle*{1}}
\put(160,10){\circle*{1}}
\put(165,10){\circle*{1}}
\put(170,10){\circle*{1}}
\put(175,10){\circle*{10}}
\put(180,10){\line (1,0){30}}
\put(215,10){\circle*{10}}
\put(219,12){\line (1,0){41}}
\put(219,8){\line(1,0){41}}
\put(235,5.5){$<$}
\put(265,10){\circle*{10}}
\end{picture} \,\,\,\raisebox{3pt} {$C_{m+2}$}
}}
\vskip 5pt

This case has already been considered in $4.2.7.$ above. And we know from the calculation we did there that the generic isotropy subgroup of $(GL(2)\times Sp(m), V_{2})$ consists of certain pairs of the form $(g_{2},g_{3})$ where $g_{2}$ takes all values in $SL(2)$. Therefore the generic isotropy subgroup of $(G,V_{2})$ acting on $V_{1}$  is the representation $(GL(n)\times SL(2),V_{1})$ with $n>2$. As this representation has no relative invariant we can apply Proposition \ref{prop-isotropie-rang1}, and obtain that this $MF$ space is $QD1$. The fundamental relative invariant is given by $f(u,v)= Pf({^tv}Jv)$, where $v\in M_{2m,2}$, and where  $J=\left[\begin{array}{cc}0&Id_{m}\\
-Id_{m}&0
\end{array}\right]$. As this  invariant does only depend on $v$, the corresponding $PV$ is not regular. It is case $(8)$ in Table 3.
\vskip 5pt

b) Consider now the case where $n=2$. Here each of the two subspaces $(G,V_{1})$ and $(G,V_{2})$ has his own relative invariant (the determinant on $V_{1}$ and the preceding invariant $f(u,v)= Pf({^tv}Jv)$ on $V_{2}$). Therefore this space is not $QD1$ if $n=2$.

 \vskip5pt
\subsubsection{} $(Sp(n)\otimes SL(2))\oplus_{SL(2)}(SL(2)\otimes Sp(m)) \times {\C^*}^2,\, (n,m\geq 2)$. 
\vskip 5pt
Up to geometric equivalence we can take $G=Sp(n)\times GL(2)\times Sp(m)\times \C^*$ acting on $V=V_{1}\oplus V_{2}$ where $V_{1}=M_{2n,2}$ and $V_{2}=M_{2m,2}$ by
$$(g_{1},g_{2},g_{3},\lambda).(X,Y)=(g_{1}X{^t \kern -1pt g_{2}},\lambda g_{3}Y{^t \kern -1pt g_{2}}),$$
where $g_{1}\in Sp(n), g_{2}\in GL(2), g_{3}\in Sp(m),\lambda \in \C^*, X\in M_{2n,2}, Y\in M_{2m,2}$.

According to the diagramatical rules in Remark \ref{rem-diagram} d) this space is not of parabolic type..

As each of the representations $(G,V_{1})$ 	and $(G,V_{2})$ is of the type seen in 4.1.7. above, they have each their own fundamental relative invariant. Therefore this $MF$ space is not $QD1$.

 \vskip5pt
\subsubsection{} $Sp(n)\oplus_{Sp(n)} Sp(n) \times {\C^*}^2,\, (n\geq 2)$. 
\vskip 5pt
 
Here $G=Sp(n)\times {\C^*}^2$ acts on $V= M_{2n,1}\oplus M_{2n,1}$ by
$$(g,\lambda,\mu).(u,v)=(\lambda gu, \mu gv),\, g\in Sp(n),\lambda,\mu\in \C,\,u,v\in M_{2n,1}.$$
At the infinitesimal level the Lie algebra ${\go g}= {\go{sp}}(n)\times \C^2$ acts on $V$ by
$$(x,\lambda, \mu)(u,v)=(\lambda u+ x u, \mu v+ x v), \, x\in {\go{sp}}(n),\lambda,\mu\in \C,\,u,v\in M_{2n,1}.$$
First of all let us remark that there is at least one fundamental relative invariant, namely $f(u,v)= {^t\kern-1pt uJv}$ where $J=\left[\begin{array}{cc}0&Id_{n}\\
-Id_{n}&0
\end{array}\right]$.

Consider the element $X_{0}=(e_{1},e_{n+1})\in M_{2n,1}\oplus M_{2n,1}$ where $e_{j}$ is the $j$-th vector of the canonical base of $M_{2n,1}\simeq \C^{2n}$. An easy calculation shows that the isotropy subalgebra ${\go g}_{X_{0}}$ of $X_{0}$ is given by:
$${\go g}_{X_{0}}=\{(\left[\begin{array}{c|c} \begin{array}{cc}-\lambda&0\\
0&A\end{array}&\begin{array}{cc}0&0\\
0&B\end{array}\\ \hline
\begin{array}{cc}0&0\\ 
0&C\end{array}&\begin{array}{cc}\lambda&0\\
0&-{^t\kern-1pt A}\end{array}
\end{array}\right],\lambda,-\lambda),\, A\in {\go{gl}}({n-1}), B,C\in Sym(n-1),\lambda\in \C^*\}.$$
As $\dim {\go g}-\dim {\go g}_{X_{0}}=\dim V$, the point $X_{0}$ is generic. As ${\go g}_{X_{0}}$ is the Lie algebra of a reductive subgroup, this $PV$ is regular. Let $\widetilde{\go g}$ the Lie algebra generated by $[{\go g},{\go g}]=  {\go sp}(n)\times \{0\}\times\{0\}$ and ${\go g}_{X_{0}}$. We have  $\dim ({\go g}/\widetilde{\go g})=1$. Then according to Remark \ref{rem-caracteres}, the polynomial $f(u,v)= {^t\kern-1pt uJv}$ is the only fundamental relative invariant. Therefore this space is $QD1$. According to Remark \ref{rem-diagram} $d)$, it is not of parabolic type. It is case $(9)$ in Table 3.

 \vskip5pt
\subsubsection{} $Spin(8)\oplus_{Spin(8)} SO(8) \times {\C^*}^2$.

Let $\rho$ be one of the Spin representations of  $Spin(8)$. Here $G=Spin(8)\times {\C^*}^2$ acts on $V=\C^8\oplus \C^8$ by
$$(g,\lambda,\mu)(u,v)= (\lambda gu, \mu \rho(g)v), g\in Spin(8), \lambda,\mu\in \C^*, u,v\in \C^8.$$
This is a parabolic $PV$ in $E_{6} $ corresponding to the diagram:

 \hskip 80pt\hbox{\unitlength=0.5pt
\begin{picture}(300,60)(-82,-5)
\put(10,0){\circle*{10}}
 \put(10,0){\circle{18}}
\put(15,0){\line  (1,0){30}}
\put(50,0){\circle*{10}}
 
\put(55,0){\line  (1,0){30}}
\put(90,0){\circle*{10}}
 
\put(90,-5){\line  (0,-1){20}}
\put(90,-30){\circle*{10}}
 
\put(95,0){\line  (1,0){30}}
\put(130,0){\circle*{10}}
 
\put(135,0){\line  (1,0){30}}
\put(170,0){\circle*{10}}
\put(170,0){\circle{18}}
 
  \end{picture}
  }

\vskip 20pt
As each of the two summands of this representation has his own fundamental relative invariant (a quadratic form), this space is not $QD1$.

\vfill\eject
{\scriptsize

\centerline{ \bf  Tables of indecomposable, saturated, multiplicity free representations}
 
 \centerline{\bf with one dimensional quotient }
\vskip 5pt
\centerline{{\bf Table 2: Irreducible representations}}
 \centerline{(Notations for representations as in \cite{Benson-Ratcliff-article}, see section $4.1.$)}
\centerline{\begin{tabular}{|c|c|c|c|}
\hline
&&&\\
{\rm Representation, rank}
&{\rm Weighted Dynkin diagram (if parabolic type)}
 &Regular   
&Fundamental invariant
 \\
 &&&\\
\hline
\hline
\raisebox{20pt}{\vbox{\hbox{(1)}\kern 5pt\hbox{$  SO(n)\times {\bb C}^* $ ($n\geq 3$)}
\hbox{($n\geq 3$),\text{ rank=2}}}}
&  \vbox{
  \hbox{ $n=2p+1$ \unitlength=0.5pt
\begin{picture}(250,30)(-10,0)
\put(10,10){\circle*{10}}
\put(10,10){\circle{16}}
\put(15,10){\line (1,0){30}}
\put(50,10){\circle*{10}}
\put(55,10){\line (1,0){30}}
\put(90,10){\circle*{10}}
\put(95,10){\line (1,0){30}}
\put(130,10){\circle*{10}}
\put(135,10){\circle*{1}}
\put(140,10){\circle*{1}}
\put(145,10){\circle*{1}}
\put(150,10){\circle*{1}}
\put(155,10){\circle*{1}}
\put(160,10){\circle*{1}}
\put(165,10){\circle*{1}}
\put(170,10){\circle*{10}}
\put(174,12){\line (1,0){41}}
\put(174,8){\line (1,0){41}}
\put(190,5.5){$>$}
\put(220,10){\circle*{10}}
\end{picture} $B_{p+1}$
}
{\vskip20pt}
\vbox{\hbox{ \raisebox{9pt}{$n=2p$} \,\,\,\,   \unitlength=0.5pt
\begin{picture}(240,40)(0,-15)
\put(10,10){\circle*{10}}
\put(10,10){\circle{16}}
\put(15,10){\line (1,0){30}}
\put(50,10){\circle*{10}}
\put(55,10){\line (1,0){30}}
\put(90,10){\circle*{10}}
\put(95,10){\line (1,0){30}}
\put(130,10){\circle*{10}}
\put(140,10){\circle*{1}}
\put(145,10){\circle*{1}}
\put(150,10){\circle*{1}}
\put(155,10){\circle*{1}}
\put(160,10){\circle*{1}}
\put(170,10){\circle*{10}}
\put(175,10){\line (1,0){30}}
\put(210,10){\circle*{10}}
\put(215,14){\line (1,1){20}}
\put(240,36){\circle*{10}}
\put(215,6){\line(1,-1){20}}
\put(240,-16){\circle*{10}}
\end{picture}\,\,\,\,\, \raisebox{9pt}{$D_{p+1}$}}
}
\vskip 10pt
\hbox{\hskip 40pt Commutative Parabolic (both)}}
& \raisebox{20pt}{Yes}
&\raisebox{20pt} {\vbox{ \hbox{Non degenerate}
 \hbox {quadratic form}
}}\\
&&&\\
\hline
{\vbox{\hbox{(2)}\kern 3pt\hbox{$  S^2(SL(n))\times {\bb C}^* $  }
\hbox{$(n\geq 2)$,\text{ rank= n}}}}
&\hskip 40pt{
\vbox{\hbox{\unitlength=0.5pt
\begin{picture}(250,30)
\put(10,10){\circle*{10}}
\put(15,10){\line (1,0){30}}
\put(50,10){\circle*{10}}
\put(55,10){\line (1,0){30}}
\put(90,10){\circle*{10}}
\put(95,10){\circle*{1}}
\put(100,10){\circle*{1}}
\put(105,10){\circle*{1}}
\put(110,10){\circle*{1}}
\put(115,10){\circle*{1}}
\put(120,10){\circle*{1}}
\put(125,10){\circle*{1}}
\put(130,10){\circle*{1}}
\put(135,10){\circle*{1}}
\put(140,10){\circle*{10}}
\put(145,10){\line (1,0){30}}
\put(180,10){\circle*{10}}
\put(184,12){\line (1,0){41}}
\put(184,8){\line(1,0){41}}
\put(200,5.5){$<$}
\put(230,10){\circle*{10}}
\put(230,10){\circle{16}}
\end{picture} \,\,\,\,\, $C_{n}$
}
\vskip 5pt
\hbox{Commutative Parabolic}}
}
  &Yes
& \raisebox{-2pt}{{\vbox{ \hbox{Determinant on }
 \hbox  {symmetric matrices}}}}\\
&&&\\
\hline
&&&\\
{\vbox{\hbox{(3)}\kern 3pt\hbox{$  \Lambda^2(SL(n))\times {\bb C}^*$}
\hbox{($n\geq 4$) and $n=2p$}
\hbox{\text{rank=p}}}}
&\hskip 40pt{
\vbox{\hbox{\unitlength=0.5pt
\begin{picture}(240,35)(0,-15)
\put(10,0){\circle*{10}}
\put(15,0){\line (1,0){30}}
\put(50,0){\circle*{10}}
\put(55,0){\line (1,0){30}}
\put(90,0){\circle*{10}}
\put(95,0){\line (1,0){30}}
\put(130,0){\circle*{10}}
\put(140,0){\circle*{1}}
\put(145,0){\circle*{1}}
\put(150,0){\circle*{1}}
\put(155,0){\circle*{1}}
\put(160,0){\circle*{1}}
\put(170,0){\circle*{10}}
\put(175,0){\line (1,0){30}}
\put(210,0){\circle*{10}}
\put(215,4){\line (1,1){20}}
\put(240,26){\circle*{10}}
\put(240,26){\circle{16}}
\put(215,-4){\line (1,-1){20}}
\put(240,-26){\circle*{10}}
\end{picture}\hskip 20pt\raisebox {8pt}{$D_{2p}$}
}
\vskip 5pt
\hbox{Commutative Parabolic}}
}
&\raisebox{5pt}{Yes} & \raisebox{5pt}{{\vbox{ \hbox{pfaffian on skew}
 \hbox  {symmetric matrices}}}}\\
 &&&\\
 \hline
{\vbox{ \hbox{(4)}\kern 5pt\hbox{$ (SL(n)^*\otimes SL(n))\times {\bb C}^*$ }
\hbox{ $(n\geq 2)$, \text{rank=n}}}} 
& \hskip 40pt{
\vbox{\hbox{\unitlength=0.5pt
\hskip-80pt
\begin{picture}(400,30)(0,10)
\put(90,10){\circle*{10}}
\put(85,-10){$\alpha_1$}
\put(95,10){\line (1,0){30}}
\put(130,10){\circle*{10}}
 \put(140,10){\circle*{1}}
\put(145,10){\circle*{1}}
\put(150,10){\circle*{1}}
\put(155,10){\circle*{1}}
\put(160,10){\circle*{1}}
\put(165,10){\circle*{1}}
\put(170,10){\circle*{1}}
\put(175,10){\circle*{1}}
\put(180,10){\circle*{1}}
 \put(195,10){\circle*{10}}
 \put(195,10){\line (1,0){30}}
\put(230,10){\circle*{10}}
\put(220,-10){$\alpha_{n}$}
\put(230,10){\circle{18}}
\put(235,10){\line (1,0){30}}
\put(270,10){\circle*{10}}
 \put(280,10){\circle*{1}}
\put(285,10){\circle*{1}}
\put(290,10){\circle*{1}}
\put(295,10){\circle*{1}}
\put(300,10){\circle*{1}}
\put(305,10){\circle*{1}}
\put(310,10){\circle*{1}}
\put(315,10){\circle*{1}}
\put(320,10){\circle*{1}}
\put(330,10){\circle*{10}}
\put(335,10){\line (1,0){30}}
\put(370,10){\circle*{10}}
\put(360,-10){$\alpha_{2n-1}$}
\end{picture} 
\hskip 20pt\raisebox {-4pt}{$A_{2p-1}$}
}
\vskip 15pt
\hbox{\hskip 15pt Commutative Parabolic}}
}
  & \raisebox{10pt}{Yes} &  \raisebox{10pt}{{\vbox{ \hbox{Determinant on full}
 \hbox  {matrix space }}}}\\
&&&\\
\hline
&&&\\
 \raisebox{10pt}{{\vbox{\hbox{(5)}\kern 5pt\hbox{$ E_{6}  \times {\bb C}^*$ (dim=27)}
 \hbox{\text{rank=3}}}}}
& 
\hskip 40pt{
\vbox{\hbox{\unitlength=0.5pt
\begin{picture}(240,40)(-10,-45)
\put(10,0){\circle*{10}}
\put(15,0){\line  (1,0){30}}
\put(50,0){\circle*{10}}
\put(55,0){\line  (1,0){30}}
\put(90,0){\circle*{10}}
\put(90,-5){\line  (0,-1){30}}
\put(90,-40){\circle*{10}}
\put(95,0){\line  (1,0){30}}
\put(130,0){\circle*{10}}
\put(135,0){\line  (1,0){30}}
\put(170,0){\circle*{10}}
\put(175,0){\line (1,0){30}}
\put(210,0){\circle*{10}}
\put(210,0){\circle{16}}
\end{picture}\hskip 20pt\raisebox {20pt}{$E_{7}$}
}
\vskip 5pt
\hbox{\hskip 5ptCommutative Parabolic}}
}
&\raisebox{10pt}{Yes} &  \raisebox{10pt}{Freudenthal cubic}\\
&&&\\
\hline
&&&\\
 \raisebox{0pt}{{\vbox{\hbox{(6)}\kern 5pt\hbox{$ (SL(2)\otimes Sp(n))\times {\bb C}^*$}
 \hbox{($n\geq 2$),\text{ rank=3}}}}}
 & \hskip 10pt\raisebox{5pt}{{
\hbox{\unitlength=0.5pt
\begin{picture}(250,30)
\put(10,10){\circle*{10}}
\put(15,10){\line (1,0){30}}
\put(50,10){\circle{16}}
\put(50,10){\circle*{10}}
\put(55,10){\line (1,0){30}}
\put(90,10){\circle*{10}}
\put(95,10){\circle*{1}}
\put(100,10){\circle*{1}}
\put(105,10){\circle*{1}}
\put(110,10){\circle*{1}}
\put(115,10){\circle*{1}}
\put(120,10){\circle*{1}}
\put(125,10){\circle*{1}}
\put(130,10){\circle*{1}}
\put(135,10){\circle*{1}}
\put(140,10){\circle*{10}}
\put(145,10){\line (1,0){30}}
\put(180,10){\circle*{10}}
\put(184,12){\line (1,0){41}}
\put(184,8){\line(1,0){41}}
\put(200,5.5){$<$}
\put(230,10){\circle*{10}}
\end{picture} \,\,\,\,\, $C_{n+2}$
}}}
& \raisebox{5pt}{Yes}&\vbox{\hbox {$Pf(^t{X}JX)$}
\hbox{$X\in M(2n,2)$}
\hbox{$Pf$=pfaffian\, of}
\hbox{$2\times2$\, matrices}}\\
&&&\\
\hline
&&&\\
\raisebox{0pt}{\vbox{\hbox{(7)}\kern 5pt\hbox{$  SL(4)\times Sp(2)\times {\bb C}^*$}
\hbox{\text{rank=6}}}}
& \raisebox{15pt}{{
\hbox{\unitlength=0.5pt
\begin{picture}(250,30)
\put(10,10){\circle*{10}}
\put(15,10){\line (1,0){30}}
\put(50,10){\circle*{10}}
\put(55,10){\line (1,0){30}}
\put(90,10){\circle*{10}}
\put(95,10){\line (1,0){40}}
\put(140,10){\circle*{10}}
\put(140,10){\circle{16}}
\put(145,10){\line (1,0){30}}
\put(180,10){\circle*{10}}
\put(184,12){\line (1,0){41}}
\put(184,8){\line(1,0){41}}
\put(200,5.5){$<$}
\put(230,10){\circle*{10}}
\end{picture} \,\,\,\,\, $C_{6}$
}}}&\raisebox{15pt}{ Yes} & \raisebox{15pt}{ Det(X),\, $X\in M(4)$}\\
\hline
&&&\\
\raisebox{0pt}{\vbox{\hbox{(8)}\kern 5pt\hbox{$  Spin(7)\times {\bb C}^* $}
\hbox{\text{rank=2}}}}&\hskip 30pt\raisebox{10pt}{{
\hbox{\unitlength=0.5pt
\begin{picture}(250,30)
\put(10,10){\circle*{10}}
\put(15,10){\line (1,0){30}}
\put(50,10){\circle*{10}}
\put(55,12){\line (1,0){41}}
\put(65,5.5){$>$}
\put(55,8){\line(1,0){41}}
\put(90,10){\circle*{10}}
\put(95,10){\line (1,0){40}}
\put(140,10){\circle*{10}}
\put(140,10){\circle{16}}
\end{picture}  $F_{4}$
}}} &\raisebox{5pt}{Yes}&\raisebox{5pt}{{\vbox{ \hbox{Non degenerate}
 \hbox {quadratic form}
 \hbox{($Spin(7)\hookrightarrow SO(8)$)}
}}}\\
\hline
&&&\\
\raisebox{0pt}{\vbox{\hbox{(9)}\kern 5pt\hbox{$ Spin(9) \times {\bb C}^*$}
\hbox{\text{rank=3}}}}& \raisebox{10pt}{Non parabolic} &\raisebox{5pt}{Yes}&\raisebox{5pt}{{\vbox{ \hbox{Non degenerate}
 \hbox {quadratic form}
 }}}\\
&&&\\
\hline
&&&\\
{\vbox{\hbox{(10)}\kern 5pt\hbox{$  G_{2}\times {\bb C}^*$ $(\dim =7)$}
\hbox{\text{rank=2}}}}& \raisebox{10pt}{Non parabolic} & \raisebox{10pt}{Yes}&\raisebox{5pt}{{\vbox{ \hbox{Non degenerate}
 \hbox {quadratic form}
  \hbox {$G_{2}\hookrightarrow SO(7)$}
 }}}\\
\hline
\end{tabular}}
 }

\pagebreak
{\scriptsize
 \centerline{{\bf Table 3: Non Irreducible representations}}
 
 \centerline{(Notations for representations as in \cite{Benson-Ratcliff-article}, see section $4.2.$)}
     \vskip 10pt
\centerline{\begin{tabular}{|c|c|c|c|}
\hline
&&&\\
  {\rm Representation} 
&{\rm Weighted Dynkin diagram (if parabolic type)}
 &Regular   
&Fundamental invariant\\
 &&&\\
\hline
\hline
&&&\\
{{\vbox{ \hbox{(1)}\kern 2pt
 \hbox {$ ( SL(n)^*\oplus_{_{SL(n)}}SL_{n})\times ({\bb C}^*)^2$
}
 \hbox {$ n\geq 2$
}
\hbox{\text{rank=3}}
 }}} 
&{
\hbox{\unitlength=0.5pt
\begin{picture}(250,30)
\put(10,10){\circle*{10}}
\put(10,10){\circle{16}}
\put(15,10){\line (1,0){30}}
\put(50,10){\circle*{10}}
\put(60,10){\circle*{1}}
\put(65,10){\circle*{1}}
\put(70,10){\circle*{1}}
\put(75,10){\circle*{1}}
\put(80,10){\circle*{1}}
\put(90,10){\circle*{10}}
\put(95,10){\line (1,0){30}}
\put(130,10){\circle*{10}}
 \put(135,10){\line (1,0){30}}
\put(170,10){\circle*{10}}
\put(180,10){\circle*{1}}
\put(185,10){\circle*{1}}
\put(190,10){\circle*{1}}
\put(195,10){\circle*{1}}
\put(200,10){\circle*{1}}
\put(210,10){\circle*{10}}
\put(215,10){\line (1,0){30}}
\put(250,10){\circle*{10}}
\put(250,10){\circle{16}}
\end{picture}
}}\hskip 10pt \raisebox{3pt}{$A_{n+1}$}
&\raisebox{3pt}{Yes}&{{\vbox{ \hbox{$f(u,v)=uv$}
 \hbox {on $M(1,n)\oplus M(n,1)$}
 }}}\\
&&&\\
\hline
&&&\\
\raisebox{10pt} {\vbox{\hbox{(2)(a)}\kern 2pt\hbox{$ (SL(n)\oplus_{_{SL(n)}}\Lambda^2(SL(n))\times ({\bb C}^*)^2$ \ }  
\hbox {($n\geq4, n=2p \text{  even}$)} 
\hbox{\text{rank=n=2p}}
 }}
 &\raisebox{0pt}{\vbox{\hbox{non parabolic except for the case: } \vskip 5pt {
\hbox{\unitlength=0.5pt
\begin{picture}(200,30)(5,-60)
\put(10,0){\circle*{10}}
\put(10,0){\circle{16}}
\put(15,0){\line  (1,0){30}}
\put(50,0){\circle*{10}}
\put(55,0){\line  (1,0){30}}
\put(90,0){\circle*{10}}
\put(90,-5){\line  (0,-1){30}}
\put(90,-40){\circle*{10}}
\put(90,-40){\circle{16}}
\put(95,0){\line  (1,0){30}}
\put(130,0){\circle*{10}}
\put(135,0){\line  (1,0){30}}
\put(170,0){\circle*{10}}
\put(175,0){\line (1,0){30}}
\put(210,0){\circle*{10}}
\end{picture} \hskip 20pt\raisebox {27pt}{$E_{7}, n=6$}
}} }}&\raisebox{20pt}{No}& \raisebox{15pt}{{\vbox{ \hbox{pfaffian on skew}
 \hbox  {symmetric matrices}\hbox{(on 2nd component)}} }}\\
&&& \\
\hline
&&&\\
\raisebox{30pt} {\vbox{\hbox{(2)(b)}\kern 2pt\hbox{$ (SL(n)\oplus_{_{SL(n)}}\Lambda^2(SL(n))\times ({\bb C}^*)^2$ }  
\hbox {($n\geq4, n=2p+1 \text{  odd}$)} 
\hbox{\text{rank=n=2p+1}}
 }}
 &\raisebox{10pt}{\vbox{\hbox{non parabolic except for the cases: } \vskip 5pt {
\hbox{\unitlength=0.5pt
\begin{picture}(190,40)(-10,-45)
\put(10,0){\circle*{10}}
\put(10,0){\circle{16}}
\put(15,0){\line  (1,0){30}}
\put(50,0){\circle*{10}}
\put(55,0){\line  (1,0){30}}
\put(90,0){\circle*{10}}
\put(90,-5){\line  (0,-1){30}}
\put(90,-40){\circle*{10}}
\put(90,-40){\circle{16}}
\put(95,0){\line  (1,0){30}}
\put(130,0){\circle*{10}}
\put(135,0){\line  (1,0){30}}
\put(170,0){\circle*{10}}
\end{picture} \hskip 0pt\raisebox {20pt}{$E_{6}, n=5$,}
  }}
 \vskip 7pt {\hbox{\unitlength=0.5pt
\begin{picture}(270,40)(-10,-45)
\put(10,0){\circle*{10}}
\put(10,0){\circle{16}}
\put(15,0){\line  (1,0){30}}
\put(50,0){\circle*{10}}
\put(55,0){\line  (1,0){30}}
\put(90,0){\circle*{10}}
\put(90,-5){\line  (0,-1){30}}
\put(90,-40){\circle*{10}}
\put(90,-40){\circle{16}}
\put(95,0){\line  (1,0){30}}
\put(130,0){\circle*{10}}
\put(135,0){\line  (1,0){30}}
\put(170,0){\circle*{10}}
\put(175,0){\line (1,0){30}}
\put(210,0){\circle*{10}}
\put(215,0){\line (1,0){30}}
\put(250,0){\circle*{10}}
\end{picture} \hskip 0pt\raisebox {20pt}{$E_{8}, n=7$.}
}} }
  }&\raisebox{50pt}{Yes}& \raisebox{40pt}{{\vbox{ \hbox{$f(v,x)=$}\hbox{ $Pf(\left[
\begin{array}{c|c}
x &v\\ \hline
-^tv &0
\end{array}\right])$}
\vskip 5pt
 \hbox  { $v\in {\C^n}$}\hbox{$x\in AS(2p+1)$}} }}\\
 \hline
&&&\\
\raisebox{0pt} {\vbox{\hbox{(3)}\kern 2pt\hbox{$ (SL(n)^*\oplus_{_{SL(n)}}\Lambda^2(SL(n))\times ({\bb C}^*)^2$ \ }  
\hbox {($n\geq4, n=2p \text{ even}$)} 
\hbox{\text{rank=n}}
 }}
&\raisebox{10pt}{\hbox{\unitlength=0.5pt
\begin{picture}(240,35)(0,-15)
\put(10,0){\circle*{10}}
\put(10,0){\circle{16}}
\put(15,0){\line (1,0){30}}
\put(50,0){\circle*{10}}
\put(55,0){\line (1,0){30}}
\put(90,0){\circle*{10}}
\put(95,0){\line (1,0){30}}
\put(130,0){\circle*{10}}
\put(140,0){\circle*{1}}
\put(145,0){\circle*{1}}
\put(150,0){\circle*{1}}
\put(155,0){\circle*{1}}
\put(160,0){\circle*{1}}
\put(170,0){\circle*{10}}
\put(175,0){\line (1,0){30}}
\put(210,0){\circle*{10}}
\put(215,4){\line (1,1){20}}
\put(240,26){\circle*{10}}
\put(240,26){\circle{16}}
\put(215,-4){\line (1,-1){20}}
\put(240,-26){\circle*{10}}
\end{picture}\hskip 20pt\raisebox {5pt}{$D_{2p}$}
}}
 &\raisebox{10pt}{No}&\raisebox{10pt}{\vbox{ 
\hbox{$Pf(x)$\, \,\,(pfaffian)}\vskip 5pt\hbox{(on 2nd component)}}}\\
&&&\\
\hline&&&\\
\raisebox{25pt}{\vbox{\hbox{(4)(a)}\kern 2pt\hbox{$  SL(n)\oplus_{_{SL(n)}}(SL(n)\otimes SL(n))\times ({\bb C}^*)^2, n\geq 2$}} }&\raisebox{10pt}{\vbox{\hbox{non parabolic except for the cases: } \vskip 5pt {\hbox{\unitlength=0.5pt
\begin{picture}(230,30)(70,0)
\put(90,0){\circle*{10}}
\put(95,0){\line (1,0){30}}
\put(130,0){\circle*{10}}
\put(135,0){\line (1,0){30}}
\put(170,0){\circle*{10}}
\put(170,0){\circle{16}}
\put(175,0){\line (1,0){30}}
\put(210,0){\circle*{10}}
\put(215,4){\line (1,1){20}}
\put(240,26){\circle*{10}}
\put(240,26){\circle{16}}
\put(215,-4){\line (1,-1){20}}
\put(240,-26){\circle*{10}}
\end{picture}\hskip -5pt\raisebox {0pt}{$D_{6},\, n=3$}
}}
 \vskip 7pt {\hbox{\unitlength=0.5pt
\begin{picture}(270,40)(-10,-10)
\put(10,0){\circle*{10}}
\put(15,0){\line  (1,0){30}}
\put(50,0){\circle*{10}}
\put(55,0){\line  (1,0){30}}
\put(90,0){\circle*{10}}
\put(90,-5){\line  (0,-1){30}}
\put(90,-40){\circle*{10}}
\put(90,-40){\circle{16}}
\put(95,0){\line  (1,0){30}}
\put(130,0){\circle*{10}}
\put(130,0){\circle{16}}
\put(135,0){\line  (1,0){30}}
\put(170,0){\circle*{10}}
\put(175,0){\line (1,0){30}}
\put(210,0){\circle*{10}}
\put(215,0){\line (1,0){30}}
\put(250,0){\circle*{10}}
\end{picture} \hskip 0pt\raisebox {5pt}{$E_{8}, n=4$.}
}} }
  }&\raisebox{30pt}{No}&\raisebox{30pt}{\vbox{ 
\hbox{Determinant}\vskip 5pt\hbox{(on 2nd component)}}}\\
&&&\\
\hline&&&\\
\raisebox{25pt}{\vbox{\hbox{(4)(b)}\kern 2pt\hbox{$(  SL(n)\oplus_{_{SL(n)}}(SL(n)\otimes SL(n-1))\times ({\bb C}^*)^2, n\geq 3$}} }&\raisebox{10pt}{\vbox{\hbox{non parabolic except for the cases: } \vskip 5pt {\hbox{\unitlength=0.5pt
\begin{picture}(230,30)(110,0)
\put(130,0){\circle*{10}}
\put(135,0){\line (1,0){30}}
\put(170,0){\circle*{10}}
\put(170,0){\circle{16}}
\put(175,0){\line (1,0){30}}
\put(210,0){\circle*{10}}
\put(215,4){\line (1,1){20}}
\put(240,26){\circle*{10}}
\put(240,26){\circle{16}}
\put(215,-4){\line (1,-1){20}}
\put(240,-26){\circle*{10}}
\end{picture}\hskip -25pt\raisebox {0pt}{$D_{5},\, n=3$}
}}
 \vskip 7pt {\hbox{\unitlength=0.5pt
\begin{picture}(270,40)(-10,-10)
\put(10,0){\circle*{10}}
\put(15,0){\line  (1,0){30}}
\put(50,0){\circle*{10}}
\put(55,0){\line  (1,0){30}}
\put(90,0){\circle*{10}}
\put(90,-5){\line  (0,-1){30}}
\put(90,-40){\circle*{10}}
\put(90,-40){\circle{16}}
\put(95,0){\line  (1,0){30}}
\put(130,0){\circle*{10}}
\put(130,0){\circle{16}}
\put(135,0){\line  (1,0){30}}
\put(170,0){\circle*{10}}
\put(175,0){\line (1,0){30}}
\put(210,0){\circle*{10}}
\end{picture} \hskip -10pt\raisebox {5pt}{$E_{7}, n=4$.}
}} }
  }&\raisebox{30pt}{Yes}&\raisebox{30pt}{\vbox{ 
\hbox{$\det(v;x)$}\vskip 5pt\hbox{$v\in M_{n,1},x\in M_{n,n-1}$}}}\\
&&&\\
\hline
&&&\\
\vbox{\hbox{(5)}\kern 2pt \hbox{$ SL(n)^*\oplus_{_{SL(n)}}(SL(n)\otimes SL(n))\times({\bb C}^*)^2, n\geq 3$}
 \hbox{\text{rank=2n}}}
&\hskip 70pt\raisebox{8pt} {\hbox{\unitlength=0.5pt
\hskip-100pt \begin{picture}(400,30)(0,5)
\put(90,10){\circle*{10}}
\put(90,10){\circle{16}}
\put(85,-10){$\alpha_1$}
\put(95,10){\line (1,0){30}}
\put(130,10){\circle*{10}}
 \put(140,10){\circle*{1}}
\put(145,10){\circle*{1}}
\put(150,10){\circle*{1}}
\put(155,10){\circle*{1}}
\put(160,10){\circle*{1}}
\put(165,10){\circle*{1}}
\put(170,10){\circle*{1}}
\put(175,10){\circle*{1}}
\put(180,10){\circle*{1}}
 \put(195,10){\circle*{10}}
 \put(195,10){\line (1,0){30}}
\put(230,10){\circle*{10}}
\put(220,-10){$\alpha_{n+1}$}
\put(230,10){\circle{18}}
\put(235,10){\line (1,0){30}}
\put(270,10){\circle*{10}}
 \put(280,10){\circle*{1}}
\put(285,10){\circle*{1}}
\put(290,10){\circle*{1}}
\put(295,10){\circle*{1}}
\put(300,10){\circle*{1}}
\put(305,10){\circle*{1}}
\put(310,10){\circle*{1}}
\put(315,10){\circle*{1}}
\put(320,10){\circle*{1}}
\put(330,10){\circle*{10}}
\put(335,10){\line (1,0){30}}
\put(370,10){\circle*{10}}
\put(360,-10){$\alpha_{2n}$}
\end{picture} 
}}  \raisebox{5pt}{$A_{2n}$}
& \raisebox{5pt}{No}& \raisebox{3pt}{{\vbox{ \hbox{Determinant   }
\vskip 5pt \hbox{(on 2nd component)}} }}\\
 &&&\\
 \hline
 \end{tabular}}}
 \vskip 5pt
\hfill  Continued next page.
 \pagebreak

{\scriptsize
 \centerline{{\bf Table 3(continued): Non Irreducible representations}}
  \centerline{(Notations for representations as in \cite{Benson-Ratcliff-article}, see section 4.2)}
     \vskip 10pt
\centerline{\begin{tabular}{|c|c|c|c|}
\hline
&&&\\
{\rm Representation, rank}
&{\rm Weighted Dynkin diagram (if parabolic type)}
 &Regular   
&Fundamental invariant
 \\
 &&&\\
\hline
\hline
&&&\\
{{\vbox{ \hbox{(6)}\kern 2pt
 \hbox {$ ( SL(2)\oplus_{_{SL(2)}}(SL(2)\otimes Sp(n))$}
 \hbox{($\times{\bb C}^*)^2$}
 \hbox {$ n\geq 2$}
\hbox{\text{rank=3}}
 }}} 
 & \hskip 70pt\raisebox{15pt}{ \hbox{\unitlength=0.5pt
\hskip -50pt\begin{picture}(280,20)
\put(10,10){\circle*{10}}
\put(10,10){\circle{16}}
\put(15,10){\line (1,0){30}}
\put(50,10){\circle*{10}}
\put(55,10){\line (1,0){30}}
\put(90,10){\circle*{10}}
\put(90,10){\circle{16}}
\put(95,10){\line (1,0){30}}
\put(130,10){\circle*{10}}
\put(130,10){\circle*{1}}
\put(135,10){\circle*{1}}
\put(140,10){\circle*{1}}
\put(145,10){\circle*{1}}
\put(150,10){\circle*{1}}
\put(155,10){\circle*{1}}
\put(160,10){\circle*{1}}
\put(165,10){\circle*{1}}
\put(170,10){\circle*{1}}
\put(175,10){\circle*{10}}
\put(180,10){\line (1,0){30}}
\put(215,10){\circle*{10}}
\put(219,12){\line (1,0){41}}
\put(219,8){\line(1,0){41}}
\put(235,5.5){$<$}
\put(265,10){\circle*{10}}
\end{picture} \,\,\,\raisebox{3pt} {$C_{n+3}$}
}}
&\raisebox{5pt}{No}&\vbox{\hbox {$Pf(^t{X}JX)$}
\hbox{$X\in M(2n,2)$}
\hbox{$Pf=pfaffian$}\hbox{(on 2nd component)}}\\
&&&\\
\hline
 &&&\\
\vbox{\hbox{(7)}\kern 2pt\hbox{$(SL(2)\otimes SL(2))\oplus_{_{SL(2)}}(SL(2)\otimes SL(n))$}\hbox{$\times (\C^*)^2$}\hbox{ ($n\geq 3$)} 
\hbox{\text{rank=5}}}&
\hskip 70pt\raisebox{25pt} {\hbox{\unitlength=0.5pt
\hskip-100pt \begin{picture}(400,30)(0,10)
\put(90,10){\circle*{10}}
\put(85,-10){$\alpha_1$}
\put(95,10){\line (1,0){30}}
\put(130,10){\circle*{10}}
\put(120,-10){$\alpha_{2}$}
\put(130,10){\circle{16}}
 \put(135,10){\line (1,0){30}}
 \put(170,10){\circle*{10}}
 \put(170,10){\line (1,0){30}}
\put(205,10){\circle*{10}}
\put(195,-10){$\alpha_{4}$}
\put(205,10){\circle{16}}
\put(210,10){\line (1,0){30}}
\put(245,10){\circle*{10}}
 \put(255,10){\circle*{1}}
\put(260,10){\circle*{1}}
\put(265,10){\circle*{1}}
\put(270,10){\circle*{1}}
\put(275,10){\circle*{1}}
\put(280,10){\circle*{1}}
\put(285,10){\circle*{1}}
\put(290,10){\circle*{1}}
\put(295,10){\circle*{1}}
\put(305,10){\circle*{10}}
\put(310,10){\line (1,0){30}}
\put(345,10){\circle*{10}}
\put(335,-10){$\alpha_{n+3}$}
\end{picture} 
}}  \raisebox{20pt}{$A_{n+3}$}
&\raisebox{20pt}{No}&\raisebox{10pt}{\vbox{\hbox {$Det(X)$, $X\in M(2,2)$}
 \hbox{(on 1st component)}}}\\
&&&\\
\hline
&&&\\
\vbox{\hbox{(8)}\kern 2pt \hbox{$ (SL(n)\otimes SL(2))\oplus_{_{SL(2)}}(SL(2)\otimes Sp(m))$}\hbox{{$\times({\bb C}^*)^2$}}\hbox{($n\geq3, m\geq2)$}
\hbox{\text{rank=6}}}& \hskip 55pt\raisebox{25pt} {\hbox{\unitlength=0.5pt
\hskip-100pt \begin{picture}(400,30)(0,10)
\put(90,10){\circle*{10}}
\put(85,-10){$\alpha_1$}
\put(95,10){\line (1,0){30}}
\put(130,10){\circle*{10}}
\put(120,-10){$\alpha_{2}$}
 \put(140,10){\circle*{1}}
\put(145,10){\circle*{1}}
\put(150,10){\circle*{1}}
\put(155,10){\circle*{1}}
\put(160,10){\circle*{1}}
\put(165,10){\circle*{1}}
\put(170,10){\circle*{1}}
\put(175,10){\circle*{1}}
\put(180,10){\circle*{1}}
 \put(195,10){\circle*{10}}
 \put(185,-10){$\alpha_{n}$}
 \put(195,10){\circle{16}}
 \put(195,10){\line (1,0){30}}
\put(230,10){\circle*{10}}
\put(235,10){\line (1,0){30}}
\put(270,10){\circle*{10}}
\put(260,-10){$\alpha_{n+2}$}
\put(270,10){\circle{16}}
 \put(280,10){\circle*{1}}
\put(285,10){\circle*{1}}
\put(290,10){\circle*{1}}
\put(295,10){\circle*{1}}
\put(300,10){\circle*{1}}
\put(305,10){\circle*{1}}
\put(310,10){\circle*{1}}
\put(315,10){\circle*{1}}
\put(320,10){\circle*{1}}
\put(330,10){\circle*{10}}
\put(335,12){\line (1,0){41}}
\put(335,8){\line(1,0){41}}
\put(347,6){$<$}
\put(372,10){\circle*{10}}
\put(355,-10){$\alpha_{n+m+2}$}
\end{picture} 
}}  \raisebox{25pt}{$C_{n+m+2}$}
&\raisebox{25pt}{No}&\vbox{\hbox {$Pf(^t{X}JX)$}
\hbox{$X\in M(2m,2)$}
\hbox{$Pf=pfaffian$}\hbox{(on 2nd component)}}\\
&&&\\
\hline
&&&\\ 
{\vbox{\hbox{(9)}\kern 2pt\hbox{$  (Sp(n)\oplus_{_{Sp(n)}}Sp(n))\times({\bb C}^*)^2, n\geq2$}
\hbox{\text{rank=4}}}}& {Non parabolic}&Yes&{{\vbox{ \hbox{$f(u,v)={^t}uJv$}
 \hbox {on $M(1,2n)\oplus M(1,2n)$}
 }}}\\
&  &&\\
\hline
\end{tabular}}}

\vfill\eject

  \vfill\eject

\end{document}